\newtheorem{thm}{Theorem}
\newtheorem{lem}{Lemma}
\newtheorem{prop}{Proposition}
\newtheorem{defn}{Definition}
\newtheorem{rem}{Remark}
\numberwithin{thm}{section}
\numberwithin{cor}{section}
\numberwithin{lem}{section}
\numberwithin{prop}{section}
\numberwithin{defn}{section}
\numberwithin{rem}{section}
\newcommand{\Real}{\mathbb R}
\numberwithin{equation}{section}
\begin{document}

\title{ Global existence in critical spaces for density-dependent incompressible viscoelastic fluids}
 \author{Daoyuan Fang\thanks{E-mail: dyf@zju.edu.cn}, Bin Han\thanks{E-mail: hanbinxy@163.com},
        Ting Zhang\thanks{E-mail: zhangting79@zju.edu.cn}\\
         \textit{\small      Department of Mathematics, Zhejiang University, Hangzhou, 310027, P. R. China}}

\maketitle

\begin{abstract}
In this paper we consider the local and global well-posedness to the
density-dependent incompressible viscoelastic fluids.
We first study some linear models associated to the incompressible
viscoelastic system. Then we approximate the system by a sequence
of ordinary differential equations, by means of the Friedrichs method.
Some uniform estimates for those solutions will be obtained.
 Using compactness arguments, we will get the local existence
 up to extracting a subsequence by means of Ascoli's lemma. With the help of
 small data conditions and hybird Besov spaces, we finally derive the global existence.
\end{abstract}

\section{Introduction}
Elastic solids and viscous fluids are two extremes of material behavior.
Viscoelastic fluids show intermediate
 behavior with some remarkable phenomena due to their elastic nature.
 Their exhibit a combination of both
 fluid and solid characteristics and have received a great deal interest. It can
  also be regarded as the consistence condition of the flow trajectories obtained
   from the velocity field $u$ and
  also of those obtained from the deformation tensor $F$. Classically the motion
  of a fluid is described
by a time-dependent family of orientations preserving diffeomorphism $X(t,x)$.
Then deformation tensor $F$ is defined as
$$F(t,x)=\frac{\partial X(t,x)}{\partial x}.$$

Applying the chain rule, we see that $F(t, x)$ satisfies the following transport equation
(see \cite{LW}):
$$\partial_tF+u\cdot \nabla F=\nabla u\cdot F.$$

The  viscoelastic fluid system of the incompressible in the Oldroyd-B model  takes
the following form:

\begin{equation}\nonumber\label{e0.1}
\left\{
 \begin{array}{ll}
 \partial_t u+u\cdot \nabla u-\mu\Delta u+\nabla \Pi=\hbox{div}(FF^\top) , \\
 \partial_tF+u\cdot \nabla F=\nabla u\cdot F, \\
  \hbox{div}u=0, \ \  ( u, F)|_{t=0}=(u_0, F_0),
   \end{array}
  \right.
\end{equation}
where $u$ is the fluid velocity, $\Pi$ is the pressure and $F$ is the deformation
tensor introduced above. Recently, the system (\ref{e0.1}) has been
studied extensively. Lin, Liu and Zhang in \cite{LLZ3},
 Lei, Liu and Zhou in \cite{LLZ1}, Lin and Zhang in \cite{LZ2} proved the local
well-posedness of (\ref{e0.1}) in Hilbert space $H^s$, and global well-posedness with small
initial data. Local well-posedness can be proved by the standard energy method,
while to obtain a global result, a very subtle energy estimate is applied to capture
the damping mechanism on $F-I$. When one adds a linear damping term in the
 evolution equation of $FF^\top$, which
is the Cauchy-Green strain tensor, Chemin and Masmoudi \cite{CM}
proved the existence of a local solution and a global small solution
in critical Besov spaces. We refer to \cite{Qi} and \cite{ZF} for
the well-posedness of the system (\ref{e0.1}) in critical spaces.

 In fact, in the real world, the density usually depends on
time. So we are more interested in  the density dependent
system, which is more close to the real situation. In this paper, we want to investigate
  the global well-posedness for the incompressible viscoelastic
  fluids described by the following system:
\begin{equation}\label{e1.1}
 \left\{
 \begin{array}{ll}
 \partial_t\rho+u\cdot\nabla\rho=0, \ x\in\Real^N,\ t>0,\\
 \partial_t(\rho u)+\hbox{div}(\rho u\otimes u)-\mu\Delta u+\nabla
 \Pi=\hbox{div}( \hbox{det}(F)^{-1}FF^\top) , \\
 \partial_tF+u\cdot \nabla F=\nabla u\cdot F, \\
  \hbox{div}u=0, \ \  (\rho, u, F)|_{t=0}=(\rho_0, u_0, F_0).
   \end{array}
  \right.
\end{equation}
 The initial data ($\rho_0,u_0, F_0$) are prescribed.

Throughout this paper, we will use the notations of
$$(\nabla v)_{i,j}=\frac{\partial v_i}{\partial x_j},
(\nabla vF)_{i,j}=(\nabla v)_{i,k}F_{k,j},(\nabla\cdot F)_i
=\partial_jF_{i,j},$$
and the summation over repeated indices will always be understood.
We also assume that $a_0=\frac{1}{\rho_0}-1$, $E_0=F_0-I $ and $E_0$
satisfy the following constrains:
\begin{equation}\label{e1.2}
\hbox{det}(E_0+I)=1,\ \hbox{div}(E_0^\top)=0,
\end{equation}
and
\begin{equation}\label{e1.3}
\partial_mE_{0ij}-\partial_jE_{0im}=E_{0lj}\partial_lE_{0im}-E_{0lm}\partial_lE_{0ij}.
\end{equation}
Using these constrains, we obtain that
\begin{equation}\label{e1.4}
\left\{
 \begin{array}{ll}
 \hbox{det}(E+I)=1,\ \hbox{div}(E^\top)=0,\\
\partial_mE_{ij}-\partial_jE_{im}=E_{lj}\partial_lE_{im}-E_{lm}\partial_lE_{ij},
 \end{array}
  \right.
\end{equation}
by Proposition ${\bf{1}}$ in \cite{QZ}. From the definition of $F$,
 we note that the assumption of  $\hbox{det}(E_0+I)=1$ is nature.
 The first two of these expressions are just the consequences of
  the incompressibility condition and the last one can be understood
   as the consistency condition for changing variables between the
    Lagrangian and Eulerian coordinates.

At this stage, we will use scaling considerations for (\ref{e1.1})
 to guess which spaces may be critical.
We observe that (\ref{e1.1}) is invariant by the transformation
$$(\rho(t,x), u(t,x),F(t,x),\Pi(t,x))\rightarrow(\rho(l^2t,lx),
lu(l^2t,lx),F(l^2t,lx),l^2\Pi(l^2t,lx)),$$
$$(\rho_0(x),v_0(x),F_0(x))\rightarrow(\rho_0(lx),lv_0(lx),F_0(lx)).$$
\begin{defn}\label{d1.1}
 A function space $E\subset(\mathscr{S}'(\Real^N)\times\mathscr{S}'
 (\Real^N))^N\times(\mathscr{S}'(\Real^N))^{N\times N}$
 is called a critical space if the associated norm
  is invariant under the transformation $(\rho(x),u(x),F(x))\rightarrow(\rho(lx),lu(lx),F(lx))$.
\end{defn}

 Obviously $\dot{H}^{\frac{N}{2}}\times(\dot{H}^{\frac{N}{2}-1})^N\times(\dot{H}^{\frac{N}{2}})^{N\times N}$
  is a critical space for the initial data. The space
   $\dot{H}^{\frac{N}{2}}$ however is not included in $L^\infty$,
 we cannot expect to get $L^\infty$ control on the density
  and deformation tensor, when we
  choose $F_0-I\in (\dot{H}^{\frac{N}{2}})^{N\times N} $.
  Moreover, the product between functions does not
  extend continuously form
 $\dot{H}^{\frac{N}{2}-1}\times\dot{H}^{\frac{N}{2}}$
 to $\dot{H}^{\frac{N}{2}-1}$, so that we will run into
  difficulties when estimating the nonlinear terms. Similar to the
   compressible Navier-Stokes system \cite{Da6},
   we could use homogeneous Besov spaces $\dot{B}^s_{2,1}(\Real^N)$
   (defined in \cite{BCD}, Chapter 2). $\dot{B}_{2,1}^{\frac{N}{2}}$ is an algebra embedded
   in $L^\infty$ which allows us to control the
 density and deformation tensor form above without requiring more
 regularity on derivatives of $\rho_0$ and $F_0$.
Form now on, we define the density and usual strain tensor by the form
$$a:=\frac{1}{\rho}-1, E:=F-I.$$
Then system (\ref{e1.1}) can be rewritten as
\begin{equation}\label{e1.5}
 \left\{
 \begin{array}{ll}
 \partial_ta+u\cdot\nabla a=0, \ x\in\Real^N,\ t>0,\\
 \partial_tu_i+u\cdot\nabla u_i-(a+1)(\mu\Delta u_i-\nabla_i \Pi)=G_i, \\
 \partial_tE+u\cdot \nabla E=\nabla u\cdot E+\nabla u, \\
  \hbox{div}u=0, \ \  (\rho, u, E)|_{t=0}=(a_0, u_0, E_0),
   \end{array}
  \right.
\end{equation}
where $G_i=(a+1)(\partial_jE_{ik}E_{jk}+\partial_jE_{ij})$.

 Now we can state our main results. First define the following functional spaces:
 $$\aligned
 X_T^s=C([0,T];{B}_{2,1}^{s-1})\cap L^1([0,T];{B}_{2,1}^{s+1})\times C([0,T];{B}_{2,1}^s),
 \endaligned$$
 $$\aligned
 Y^s=C(\Real^+;\dot{B}_{2,1}^{s-1})\cap L^1(\Real^+;\dot{B}_{2,1}^{s+1})
 \times C(\Real^+;\widetilde{B}_{\mu}^{s,\infty}).
 \endaligned$$
 Then the norm of $X_T^s$ and $Y^s$ are defined by
  $$\aligned
 \|(u,E)\|_{X^{s}_T}=\|u\|_{\widetilde{L}_T^\infty({B}_{2,1}^{s-1})}+\|u\|_{L_T^1({B}_{2,1}^{s+1})}
 +\|E\|_{\widetilde{L}_T^\infty(B_{2,1}^s)},
 \endaligned$$
 $$\aligned
 \|(u,E)\|_{Y^{s}}=\|u\|_{\widetilde{L}^\infty(\Real^+,\dot{B}_{2,1}^{s-1})}+\|u\|_{L^1(\Real^+,\dot{B}_{2,1}^{s+1})}
 +\|E\|_{\widetilde{L}^\infty(\Real^+,\widetilde{B}_{\mu}^{s,\infty})}.
 \endaligned$$
 Here $B_{p,r}^s$ denotes the nonhomogeneous Besov space and $\dot{B}_{p,r}^s$
 denotes the homogeneous space. The hybird Besov space $\widetilde{B}_{\mu}^{s,\infty}$
  will be defined in the following section.

  \begin{thm}[Local well-posedness]\label{t1.2}
 Suppose that initial data satisfy the incompressible constrain
 (\ref{e1.2}), $a_0\in B^{\frac{N}{2}}_{2,1}$,
 $u_0\in B_{2,1}^{\frac{N}{2}-1}$ and $E_0\in {B}_{2,1}^{\frac{N}{2}}$.
  Then there exist $T>0$ and a unique local solution for system (\ref{e1.2}) with
 $$a\in C([0,T];B^{\frac{N}{2}}_{2,1}),\quad (u,E)\in
 X^{\frac{N}{2}}_T\ \ \hbox{and}\ \  \nabla \Pi\in L_T^1({B}_{2,1}^{\frac{N}{2}-1}).$$

Besides, the following estimate is valid
$$ \|a\|_{\widetilde{L}_T^\infty(B^{\frac{N}{2}}_{2,1})}
+\|(u,E)\|_{X^{\frac{N}{2}}_T}\leq C(\|a_0\|_{{B}^{\frac{N}{2}}_{2,1}}
+\|u_0\|_{{B}_{2,1}^{\frac{N}{2}-1}}+\|E_0\|_{{B}_{2,1}^{\frac{N}{2}}}),$$\\
  where $C$ is a constant depending only on $N$ and $\mu$.
 \end{thm}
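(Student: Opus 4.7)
The plan is to follow the scheme announced in the abstract: construct smooth approximate solutions by a Friedrichs spectral truncation, establish estimates in the critical Besov scale that are uniform in the truncation parameter, extract a convergent subsequence by an Ascoli-type compactness argument, and finally deal with uniqueness by a contraction estimate in a slightly weaker space.

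For the approximation, let $J_n$ denote the Fourier multiplier with symbol $\mathbf{1}_{\{|\xi|\le n\}}$ and let $\mathcal{P}$ denote the Leray projector onto divergence-free fields. Every nonlinear factor in (\ref{e1.5}) is replaced by its $J_n$-truncation, and the momentum equation is projected by $\mathcal{P}J_n$ so that the pressure is eliminated. The resulting system is an ODE on the closed subspace $L^2_n=\{f\in L^2:\operatorname{supp}\widehat{f}\subset B(0,n)\}$, to which the Cauchy--Lipschitz theorem applies; this gives a maximal smooth solution $(a^n,u^n,E^n)$ on some interval $[0,T^\ast_n)$ that automatically lies in every Besov space. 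The incompressible constraints (\ref{e1.4}) are preserved along the flow and at the approximate level, as can be checked from (\ref{e1.2})--(\ref{e1.3}).

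The heart of the proof is the derivation of a bound on $(a^n,u^n,E^n)$ in $B^{N/2}_{2,1}\times X^{N/2}_T$ over an interval $[0,T]$ independent of $n$. The transport equation for $a^n$ yields the classical exponential estimate $\|a^n(t)\|_{B^{N/2}_{2,1}}\le \|a_0\|_{B^{N/2}_{2,1}}\exp\bigl(C\int_0^t\|\nabla u^n\|_{B^{N/2}_{2,1}}\,d\tau\bigr)$. Rewriting the momentum equation as a perturbed Stokes system
\[
\partial_t u^n-\mu\Delta u^n+\nabla\Pi^n=-u^n\cdot\nabla u^n+a^n(\mu\Delta u^n-\nabla\Pi^n)+G^n,
\]
one recovers $\Pi^n$ from $\operatorname{div}u^n=0$, which leads to the elliptic problem $\operatorname{div}\bigl((1+a^n)\nabla\Pi^n\bigr)=\operatorname{div}(\text{r.h.s.})$. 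Combined with the maximal regularity of the heat equation in the Chemin--Lerner space $\widetilde L^1_T(B^{N/2+1}_{2,1})$, the algebra structure of $B^{N/2}_{2,1}$ and the standard paraproduct laws, this allows one to close the estimate for $u^n$ in $X^{N/2}_T$. The equation for $E^n$ is a transport with source $\nabla u^n\cdot E^n+\nabla u^n$, handled analogously, using the product law $B^{N/2}_{2,1}\times B^{N/2}_{2,1}\to B^{N/2}_{2,1}$. Once the uniform bound is in hand, the equations themselves give uniform control of $\partial_t a^n$, $\partial_t u^n$, $\partial_t E^n$ in a negative-index Besov space; together with the spatial bounds and the local compactness of Besov embeddings, Ascoli's lemma yields a subsequence converging in $L^\infty_T(B^{N/2-\varepsilon}_{2,1,\mathrm{loc}})$ to a limit $(a,u,E)$ which satisfies (\ref{e1.5}) in the sense of distributions, lower semicontinuity of the norms preserving the critical regularity. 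Uniqueness is then obtained by estimating the difference $(\delta a,\delta u,\delta E)$ of two solutions one derivative below the existence level -- the standard device to absorb the loss of derivative in the transport estimate for $\delta a$ -- followed by Gronwall.

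The main obstacle is the pressure estimate in the critical scale. Because the coefficient $(1+a)$ sits in front of $\nabla\Pi$ with only $B^{N/2}_{2,1}$ regularity -- borderline $L^\infty$ -- the operator $\operatorname{div}\bigl((1+a)\nabla\cdot\bigr)$ is uniformly invertible from $B^{N/2}_{2,1}$ onto $B^{N/2-2}_{2,1}$ only when $\|a\|_{L^\infty}$ is small. Such smallness is available only on a short time interval, via the continuous embedding $B^{N/2}_{2,1}\hookrightarrow L^\infty$ and the transport estimate for $a^n$, and this is precisely what dictates the local-in-time nature of the statement. A subsidiary difficulty is the coupling between $a^n$, $u^n$, $E^n$, $\nabla\Pi^n$ at the estimate-closing stage, which has to be arranged so that all four quantities are controlled simultaneously by a continuity argument on $T$.
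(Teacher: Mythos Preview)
Your overall architecture---Friedrichs truncation, uniform Besov estimates, Ascoli compactness, uniqueness one derivative below---matches the paper's Section~4. The genuine gap is in the paragraph on the pressure.

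You assert that the operator $\operatorname{div}\bigl((1+a)\nabla\,\cdot\,\bigr)$ can be inverted in the critical scale as soon as $\|a\|_{L^\infty}$ is small, and that ``such smallness is available only on a short time interval, via the continuous embedding $B^{N/2}_{2,1}\hookrightarrow L^\infty$ and the transport estimate for $a^n$.'' But the transport equation $\partial_t a+u\cdot\nabla a=0$ with $\operatorname{div}u=0$ \emph{preserves} the $L^\infty$ norm of $a$: $\|a(t)\|_{L^\infty}=\|a_0\|_{L^\infty}$ for all $t$. Taking $T$ small does nothing to the size of $a$; if $a_0$ is large, so is $a(t)$, and your perturbative inversion of the elliptic operator never gets off the ground. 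Theorem~\ref{t1.2} assumes no smallness on $a_0$ (this is precisely the point of Remark~1.1), so the argument as written fails.

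The idea you are missing is the frequency splitting $a=S_{N_0}a+(a-S_{N_0}a)$, which is the content of Proposition~\ref{p3.2}. The high-frequency part $a-S_{N_0}a$ is small in $B^{N/2}_{2,1}$ for $N_0$ large (tail of a convergent series) and can be absorbed perturbatively; the low-frequency part $S_{N_0}a$ is not small but enjoys the extra regularity $\|S_{N_0}a\|_{B^{N/2+\alpha}_{2,1}}\lesssim 2^{N_0\alpha}\|a\|_{B^{N/2}_{2,1}}$, and the resulting commutators produce a loss of $\alpha$ derivatives that is recovered by interpolation against the $B^{N/2+1}_{2,1}$ control of $u$ and then buried in the Gronwall exponent. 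This device, together with the decomposition $u=u_L+\bar u$ (so that one needs smallness only of $\bar u$, obtainable for short time since $\bar u(0)=0$), is what allows the bootstrap (\ref{e4.5}) to close without any size restriction on $a_0$.
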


 \begin{rem}
 We do not need  the smallness condition on $a_0$ compared with the
  assumption of R. Danchin  in \cite{Da2} which consider the local
  well-posedness in homogeneous Besov space. The method
  was first introduced by R. Danchin in \cite{Da7} when dealing with
 the well-posedness of the barotropic viscous fluids in critical spaces.
  One can see in Section 3 that,
  for the technical reason, we could only study the local
  well-posedness on the nonhomogeneous Besov
 space without the smallness condition on $a_0$.
 \end{rem}

 \begin{thm}[Global well-posedness]\label{t1.4}
 Suppose that initial data satisfy the incompressible constrains
  (\ref{e1.2}) and (\ref{e1.3}),
 $a_0\in \widetilde{B}_{\mu}^{\frac{N}{2},\infty}$,
  $u_0\in \dot{B}_{2,1}^{\frac{N}{2}-1}$
  and $E_0\in \widetilde{B}_{\mu}^{\frac{N}{2},\infty}$ with
 $$\|a_0\|_{\widetilde{B}_{\mu}^{\frac{N}{2},\infty}}
 +\|u_0\|_{\dot{B}_{2,1}^{\frac{N}{2}-1}}
 +\|E_0\|_{\widetilde{B}_{\mu}^{\frac{N}{2},\infty}}\leq \lambda,$$
 where $\lambda$ is a small positive constant.
  Then there exists a unique global solution for system (\ref{e1.2}) with
 $$a\in C(\Real^+;\widetilde{B}_{\mu}^{\frac{N}{2},\infty} ),
 \quad (u,E)\in Y^{\frac{N}{2}}\ \  \hbox{and}\ \  \nabla \Pi\in L^1(\Real^+;
 \dot{B}^{\frac{N}{2}-1}_{2,1}).$$

Besides, the following estimate is valid
$$\aligned
 \|a\|_{\widetilde{L}_T^\infty(\widetilde{B}_{\mu}^{\frac{N}{2},\infty})}&+\|(u,E)\|_{Y^{\frac{N}{2}}}\\
 &\leq C(\|a_0\|_{ \widetilde{B}_{\mu}^{\frac{N}{2},\infty}}
+\|u_0\|_{\dot{B}_{2,1}^{\frac{N}{2}-1}}+\|E_0\|_{ \widetilde{B}_{\mu}^{\frac{N}{2},\infty}}),
\endaligned$$\\
  where $C$ is a constant depending only on $N$ and $\mu$.
 \end{thm}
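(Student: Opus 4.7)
The plan is to extend the local solution produced by Theorem \ref{t1.2} (adapted to the homogeneous/hybrid framework via the same Friedrichs approximation) to a global one by establishing \emph{small-data a priori} estimates in $Y^{N/2}$ and then running a continuation argument. The key structural observation is that the constraints (\ref{e1.4}) allow us to rewrite the stress term as $\hbox{div}(FF^\top)=\hbox{div}(E+E^\top+EE^\top)=\hbox{div}E+\hbox{div}(EE^\top)$, so that the principal part of the momentum--deformation coupling reduces to the linear system
$$\partial_t u-\mu\Delta u+\nabla\Pi=\hbox{div}E,\qquad \partial_t E=\nabla u,\qquad \hbox{div}\,u=0.$$
Differentiating and applying the Leray projector yields $\partial_t^2 u-\mu\Delta\partial_t u-\Delta u=0$, a damped wave equation whose symbol is parabolic at high frequencies $|\xi|\gtrsim\mu$ and of damped-wave type at low frequencies $|\xi|\lesssim\mu$. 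This dichotomy is precisely what the hybrid norm $\widetilde{B}_{\mu}^{N/2,\infty}$ is built to track, and it motivates why $E$ is measured in a space with two different regularity indices while $u$ is measured in the standard $\dot{B}_{2,1}^{N/2-1}\cap L^1_t\dot{B}_{2,1}^{N/2+1}$ norms.

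The linear estimates I would develop come in three pieces. For the mass equation $\partial_t a+u\cdot\nabla a=0$ I would use the standard transport estimate of \cite{BCD} applied dyadic block by dyadic block, keeping the low/high-frequency split that defines $\widetilde{B}_{\mu}^{N/2,\infty}$, which gives $\|a\|_{\widetilde{L}^\infty_T(\widetilde{B}_\mu^{N/2,\infty})}\lesssim \|a_0\|_{\widetilde{B}_\mu^{N/2,\infty}}\exp(C\int_0^T\|\nabla u\|_{\dot{B}_{2,1}^{N/2}}\,d\tau)$. For the coupled $(u,E)$ block I would diagonalize the $2\times 2$ symbol of the linear system above and carry out the energy estimates separately in each frequency regime, which yields a bound of the form $\|(u,E)\|_{Y^{N/2}}\lesssim \|(u_0,E_0)\|_{\text{data}}+\|\text{forcing}\|_{L^1_t(\dot{B}_{2,1}^{N/2-1})}+\|\text{forcing}\|_{\widetilde{L}^\infty_t(\widetilde{B}_\mu^{N/2,\infty})}$, exactly as in Danchin's treatment \cite{Da6} of the compressible Navier--Stokes equations. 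For the $E$-equation $\partial_t E+u\cdot\nabla E=\nabla u\cdot E+\nabla u$ I would combine the transport estimate with the coupling above to absorb the forcing $\nabla u$ into the low-frequency part of the hybrid norm.

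With these building blocks in hand, I would set up the bootstrap on the Friedrichs approximants $(a^n,u^n,E^n)$: the uniform smallness $\|(a_0,u_0,E_0)\|\le\lambda$ combines with the bilinear/trilinear paraproduct and composition estimates (noting that $\widetilde{B}_\mu^{N/2,\infty}\hookrightarrow L^\infty$ and that both $\widetilde{B}_\mu^{N/2,\infty}$ and $\dot{B}_{2,1}^{N/2}$ are Banach algebras) to bound every nonlinear term in the right-hand sides by $C\Lambda_n^2$, where $\Lambda_n$ is the $Y^{N/2}$-norm of the approximant. Choosing $\lambda$ small enough, the standard continuation argument shows $\Lambda_n\le 2C\lambda$ for all $t>0$ uniformly in $n$. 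I would then extract a limit by the same compactness/Ascoli argument used in the local existence proof, verify it solves (\ref{e1.5}), and obtain uniqueness by propagating the difference $(\delta a,\delta u,\delta E)$ in one-lower regularity with a Gronwall-type estimate, as is standard in critical regularity.

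The main obstacle will be closing the nonlinear term $G_i=(a+1)(\partial_jE_{ik}E_{jk}+\partial_jE_{ij})$ in the $L^1_t(\dot{B}_{2,1}^{N/2-1})$ norm. The piece $\partial_jE_{ij}$ is formally of the \emph{same order} as $u$, so it cannot be treated as perturbative; it must be absorbed into the coupled linear estimate via the damped-wave structure above, and this is what forces $E$ into the hybrid space rather than a homogeneous one. The remaining pieces $a\,\partial_jE_{ij}$ and $\partial_j(E_{ik}E_{jk})$, together with the convective terms $u\cdot\nabla u$, $u\cdot\nabla a$, $u\cdot\nabla E$ and the variable-coefficient term $a(\mu\Delta u-\nabla\Pi)$, must all be estimated by products of the hybrid norms, and the delicate point is to verify that every paraproduct/remainder combination produces a genuine $\lambda^2$ factor rather than merely $\lambda$; in particular the low-frequency bound on $aE$ must rely on the $\widetilde{B}_\mu^{N/2,\infty}$-algebra property at low frequencies with care, since the low-frequency index there differs from $N/2$.
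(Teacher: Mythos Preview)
Your overall strategy—couple $(u,E)$ at the linear level, exploit the damped-wave/parabolic dichotomy via hybrid norms, and close by a small-data bootstrap—is the paper's. But there is a genuine gap in your linear reduction. The system you write,
\[
\partial_t u-\mu\Delta u+\nabla\Pi=\hbox{div}\,E,\qquad \partial_t E=\nabla u,
\]
couples $u_i$ only to $(\hbox{div}\,E)_i=\partial_jE_{ij}$. On the Fourier side the component $\hat E_{ij}-\xi_j\xi_k|\xi|^{-2}\hat E_{ik}$ (the part of the $i$-th row of $E$ orthogonal to $\xi$) satisfies $\partial_t(\cdot)=0$ and receives \emph{no} linear damping whatsoever. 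Your ``$2\times2$ diagonalization'' therefore yields time-integrated decay for $u$ and for $\hbox{div}\,E$, but not for the full tensor $E$. Yet the quadratic stress $(a+1)\partial_jE_{ik}E_{jk}$ must sit in $L^1_T(\dot B^{\frac{N}{2}-1}_{2,1})$, and every available product bound (Propositions~\ref{p2.2} and~\ref{p2.5}) forces one factor of $\|E\|_{\widetilde B_\mu^{\frac{N}{2},1}}$; without a global $L^1_t(\widetilde B_\mu^{\frac{N}{2},1})$ estimate on \emph{all} components of $E$ this term cannot be made small as $T\to\infty$, and the bootstrap does not close.

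The paper repairs this precisely through the curl constraint (\ref{e1.3}), which you cite via (\ref{e1.4}) but never actually exploit (you use only $\hbox{div}(E^\top)=0$). Setting $d^{ij}=-\Lambda^{-1}\partial_j u^i$ and using (\ref{e1.3}) to rewrite
\[
\Lambda^{-1}\partial_j\partial_kE_{ik}=-\Lambda E_{ij}+\Lambda^{-1}\partial_k\bigl(E_{lk}\partial_lE_{ij}-E_{lj}\partial_lE_{ik}\bigr)
\]
(identity (\ref{e5.2})) produces the \emph{component-wise} coupled system $\partial_t d^{ij}-\mu\Delta d^{ij}-\Lambda E_{ij}=H$, $\partial_t E_{ij}+\Lambda d^{ij}=R$, to which Proposition~\ref{p3.4} applies and delivers $\int_0^T\|E(\tau)\|_{\widetilde B_\mu^{\frac{N}{2},1}}\,d\tau$ for the whole tensor. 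Equivalently: (\ref{e1.3}) says each row $E_{i\cdot}$ is curl-free modulo quadratic corrections, so at the linear level $E$ is entirely recoverable from $\hbox{div}\,E$, and this is what transfers the damping from the ``divergence'' mode to all of $E$. This step—either via the auxiliary variable $d$ or by explicitly inverting the row-curl—is missing from your plan and is not optional.
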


The remained sections of this paper are structured as follows. In
Section 2, we present some basic properties of Besov spaces. In
Section 3, we will study some linear models associated to
 (\ref{e1.5}). In Section 4, the local theory for (\ref{e1.5})
 will be studied and the final section is devoted to discuss the global
 existence and to give the proof of Theorem \ref{t1.4}.
\medskip\

\section{Littlewood-Paley decomposition results }
The proof of most of the results presented in the paper requires a
dyadic decomposition of Fourier variables, which is called the
\textit{Littlewood-Paley\ decomposition}. The definition of  \textit{Littlewood-Paley\ decomposition} and
Besov space
were explained explicitly in \cite{BCD,Ch1,Ch2}. Here we state some
  classical properties for the Besov spaces.
\begin{prop}\label{p2.1}
The following properties hold true:

1) Derivatives: we have
$$C^{-1}\| u\|_{\dot{B}_{p,r}^{s}}\leq\|\nabla
u\|_{\dot{B}_{p,r}^{s-1}}\leq C\|u\|_{\dot{B}_{p,r}^{s}}.$$

2) Algebraic property: for $s>0$, $\dot{B}_{p,r}^{s}\cap L^\infty$ is an
algebra.

3) Real interpolation:
$\left(\dot{B}_{p,r}^{s_1},\dot{B}_{p,r}^{s_2}\right)_{\theta,r'}=\dot{B}_{p,r'}^{\theta
s_1+(1-\theta)s_2}.$

\end{prop}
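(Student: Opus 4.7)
All three assertions are classical Besov-space facts (they are established in detail in \cite{BCD}, Chapter 2), so the plan is to sketch how each reduces to the Littlewood--Paley definition together with Bernstein's inequality, rather than to reproduce the complete textbook arguments.

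For part (1), I would note that each dyadic block $\dot\Delta_j u$ is spectrally supported in an annulus $\{|\xi|\sim 2^j\}$, so Bernstein's inequality yields the two-sided estimate $\|\nabla\dot\Delta_j u\|_{L^p}\simeq 2^j\|\dot\Delta_j u\|_{L^p}$. Because the Littlewood--Paley projector commutes with $\nabla$, one has $\dot\Delta_j(\nabla u)=\nabla\dot\Delta_j u$; taking the weighted $\ell^r$-norm with weight $2^{j(s-1)}$ then delivers the claimed equivalence. This step is entirely mechanical.

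For part (2), the natural approach is Bony's paraproduct decomposition $uv=T_u v+T_v u+R(u,v)$. The paraproducts $T_u v$ and $T_v u$ are controlled by placing the low-frequency factor in $L^\infty$ and the high-frequency factor in $\dot B^s_{p,r}$, which is exactly where the hypothesis $u,v\in L^\infty$ is used. The remainder $R(u,v)=\sum_j\dot\Delta_j u\,\widetilde{\dot\Delta}_j v$ has each summand spectrally supported in a ball of radius $\sim 2^j$, so its $\dot B^s_{p,r}$ estimate reduces via Bernstein to summing a convolution-type inequality against a geometric sequence; the assumption $s>0$ is precisely what guarantees absolute convergence of that series, and this summability is the one point that requires genuine care.

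For part (3), the cleanest route is to realize the Besov space as a retract of a weighted sequence space: the map $u\mapsto(\dot\Delta_j u)_{j\in\mathbb Z}$ defines a retraction from $\dot B^s_{p,r}$ onto $\ell^r_s(L^p)$ (modulo the usual quotient by polynomials in the homogeneous setting). Since real interpolation commutes with retractions, the claim reduces to the classical Lions--Peetre identity $(\ell^r_{s_1}(L^p),\ell^r_{s_2}(L^p))_{\theta,r'}=\ell^{r'}_{\theta s_1+(1-\theta)s_2}(L^p)$, valid provided $s_1\neq s_2$. The principal obstacle to be careful about is that the retraction is well defined only on the quotient by polynomials, which forces one to work in the distribution framework of \cite{BCD}; this is standard and poses no essential difficulty.
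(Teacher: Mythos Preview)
Your proposal is correct. The paper itself gives no proof of this proposition: it is stated as a list of ``classical properties for the Besov spaces'' with an implicit reference to \cite{BCD}, and no argument is supplied. Your sketches---Bernstein's inequality for (1), Bony's decomposition for (2), and the retraction/Lions--Peetre argument for (3)---are the standard textbook proofs and thus go beyond what the paper does; they are accurate and there is nothing to correct.
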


We recall from \cite{Da3} the following estimates for the product of
two functions. Here we only give the results of homogeneous Besov spaces,
and the same results also truth for nonhomogeneous Besov spaces.
\begin{prop}\label{p2.2}
The following estimates hold true:
$$\|uv\|_{\dot{B}_{2,1}^s}\lesssim\|u\|_{L^\infty}
\|v\|_{\dot{B}_{2,1}^s}+\|v\|_{L^\infty}\|u\|_{\dot{B}_{2,1}^s}\quad \hbox{if}\quad s>0,$$
$$\|uv\|_{\dot{B}_{2,1}^{s_1+s_2-\frac{N}{2}}}\lesssim
\|u\|_{\dot{B}_{2,1}^{s_1}}\|v\|_{\dot{B}_{2,1}^{s_2}}\quad \hbox{if}\quad s_1,s_2\leq\frac{N}{2}
\quad\hbox{and}\quad s_1+s_2>0.$$
\end{prop}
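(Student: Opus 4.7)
The plan is to prove both estimates via Bony's paraproduct decomposition
\[
uv = T_u v + T_v u + R(u,v),
\]
where $T_u v := \sum_q \dot{S}_{q-1} u\, \dot{\Delta}_q v$ is the paraproduct and $R(u,v) := \sum_{|q-q'|\le 1}\dot{\Delta}_q u\, \dot{\Delta}_{q'} v$ is the remainder. The three pieces have essentially disjoint spectral support patterns (the blocks of $T_u v$ are supported in annuli $\{|\xi|\sim 2^q\}$ while those of $R(u,v)$ are supported in balls $\{|\xi|\lesssim 2^q\}$), so the $\dot{B}_{2,1}^{\sigma}$-norm of each piece can be controlled by a weighted $\ell^1$-sum over dyadic blocks and then reassembled.

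For the first estimate, with $s>0$: I would bound $\|T_u v\|_{\dot{B}_{2,1}^s}$ by applying $L^\infty\cdot L^2$ H\"older blockwise, using $\|\dot{S}_{q-1} u\|_{L^\infty}\lesssim \|u\|_{L^\infty}$ and the almost-orthogonality of the annular supports to get $\lesssim \|u\|_{L^\infty}\|v\|_{\dot{B}_{2,1}^s}$; the term $T_v u$ is symmetric. For the remainder, since $\dot{\Delta}_j R(u,v)$ only receives contributions from pairs with $q\ge j-N_0$, I would use Bernstein to bring out the derivative loss $2^{jN/2}$ if needed, but in the $L^\infty\cdot L^2$ pairing it suffices to write
\[
\|R(u,v)\|_{\dot{B}_{2,1}^s}\lesssim \sum_j 2^{js}\!\!\sum_{q\ge j-N_0}\|\dot\Delta_q u\|_{L^\infty}\|\widetilde{\dot\Delta}_q v\|_{L^2},
\]
and then swap the order of summation. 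The geometric series $\sum_{j\le q+N_0} 2^{js}$ converges precisely because $s>0$, producing the bound $\|u\|_{L^\infty}\|v\|_{\dot{B}_{2,1}^s}$, with the symmetric bound obtained by reversing roles.

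For the second estimate, with $s_1,s_2\le N/2$ and $s_1+s_2>0$: the key idea is that the Bernstein inequality supplies the embedding $\dot{B}_{2,1}^{N/2}\hookrightarrow L^\infty$ and, more generally, $\|\dot{S}_{q-1}u\|_{L^\infty}\lesssim 2^{q(N/2-s_1)}\|u\|_{\dot{B}_{2,1}^{s_1}}$ after summing (thanks to $s_1\le N/2$ ensuring the low-frequency part is harmless). Then
\[
\|T_u v\|_{\dot{B}_{2,1}^{s_1+s_2-N/2}} \lesssim \sum_q 2^{q(s_1+s_2-N/2)}\|\dot{S}_{q-1}u\|_{L^\infty}\|\dot\Delta_q v\|_{L^2}
\lesssim \|u\|_{\dot B_{2,1}^{s_1}}\|v\|_{\dot B_{2,1}^{s_2}},
\]
and $T_v u$ is symmetric under $s_1\leftrightarrow s_2$. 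For $R(u,v)$ I would use Bernstein to land in $L^2$, giving $\|\dot\Delta_j(\dot\Delta_q u\,\widetilde{\dot\Delta}_q v)\|_{L^2}\lesssim 2^{jN/2}\|\dot\Delta_q u\|_{L^2}\|\widetilde{\dot\Delta}_q v\|_{L^2}$; summing over $q\ge j-N_0$ yields a geometric series in $2^{q(s_1+s_2)}$ that converges exactly when $s_1+s_2>0$, and this is where that hypothesis is used.

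The main obstacle I anticipate is the remainder term $R(u,v)$ in both estimates: it is the only piece that freely mixes all frequencies, so the summation must be reorganized carefully and the sharp hypotheses $s>0$ (resp.\ $s_1+s_2>0$) appear precisely as the convergence threshold of the resulting geometric series. The paraproduct pieces are easier because their spectral localization preserves the scale of the block one operates on; the delicate scale coupling only happens in $R(u,v)$.
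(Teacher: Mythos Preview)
Your argument via Bony's decomposition is correct and is precisely the standard route; note, however, that the paper does not supply its own proof of this proposition but simply recalls it from \cite{Da3} (and implicitly \cite{BCD}), where the same paraproduct/remainder analysis is carried out. In other words, your approach coincides with the one in the cited references: the paraproduct pieces are handled via $\|\dot S_{q-1}u\|_{L^\infty}$ bounds (using $s_1\le N/2$ for the Bernstein gain in the second estimate), and the remainder is controlled by the geometric series that converges exactly under $s>0$, respectively $s_1+s_2>0$.
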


Let $\Lambda=\sqrt{-\Delta}$. For $s\in\Real$, we denote
 $\Lambda^sz=\mathscr{S}^{-1}(|\xi|^s\hat{z})$, where $\hat{z}$
  is the Fourier transform of $z$. The aim of this paper is to
  get the global existence of solutions to system (\ref{e1.5}). For this,
  we define $d^{ij}=-\Lambda^{-1}\partial_ju^i$, then $u^{i}=\Lambda^{-1}\partial_jd^{ij}$.
Applying $-\Lambda^{-1}\partial_j$ to the second equation of system (\ref{e1.5}), we have
$$\partial_td^{ij}-\mu\Delta d^{ij}+u\cdot \nabla d-\Lambda E_{ij}=H.$$
$H$ will be determined in Section 5. Taking $H$ as a function
 independent of $d$ and $E$, combination with the third equation
 of system (\ref{e1.5}), we have the following linear system
\begin{equation}\label{e2.3}
 \left\{
 \begin{array}{ll}
 \partial_tE+\Lambda d=R, \\
 \partial_td^{ij}-\mu\Delta d^{ij}-\Lambda E_{ij}=H.
   \end{array}
  \right.
\end{equation}
Using the spectral analysis as in \cite{Da6}, we may expect that system
(\ref{e2.3}) has a parabolic smoothing effect on $d$ and on the low
 frequencies of $E$, while expect a damping effect on the high frequencies of $E$.
 To get the optimal estimates, we need to introduce the hybird spaces
  which are defined differently for low and high frequencies.
  One can see the details in \cite{Da6}.
\begin{defn}\label{D2.3}
For $\mu>0$, $r\in[1,+\infty]$ and $s\in \Real$, we denote
$$\|u\|_{\widetilde{B}_\mu^{s,r}}=\sum_{q\in\mathbb{Z}}2^{qs}\max\{\mu,
2^{-q}\}^{1-\frac{2}{r}}\|\dot{\Delta}_qu\|_{L^2}.$$
\end{defn}

Obviously we remark that $\|u\|_{\widetilde{B}_\mu^{s,\infty}}\approx
\|u\|_{\dot{B}_{2,1}^{s}\cap \dot{B}_{2,1}^{s-1}}$ and $\|u\|_{\widetilde{B}_\mu^{s,2}}=\|u\|_{\dot{B}_{2,1}^{s}}$.
Also we need to introduce more accurate results which may be
obtained by means of paradifferential calculus. It is introduced
first by J. M. Bony in \cite{Bo}. The paraproduct between $f$ and
$g$ is defined by $$\dot{T}_fg=\sum_{q\in\mathbb{Z}}\dot{S}_{q-1}f\dot{\Delta}_qg.$$
And define the remainder
$$\dot{R}(f,g)=\sum_{|q-p|\leq 1}\dot{\Delta}_pf\dot{\Delta}_qg.$$
We have the following so-called homogeneous Bony's decomposition:
$$fg=\dot{T}_fg+\dot{T}_gf+\dot{R}(f,g).$$

Now let us recall some estimates in hybird Besov spaces for the product
 of two functions which one can see Proposition 5.3 in \cite{Da6}.
\begin{prop}\label{p2.4}
Let $r\in[1,\infty]$ and $s,t\in\Real$. There exists a constant $C$ such that
$$\|\dot{T}_uv\|_{\widetilde{B}_\mu^{s+t-\frac{N}{2},r}}\leq
\|u\|_{\widetilde{B}_\mu^{s,r}}\|v\|_{\dot{B}_{2,1}^t},\  \mathrm{if}\ s
\leq\min\{1-\frac{2}{r}+\frac{N}{2},\frac{N}{2}\},$$
$$\|\dot{T}_uv\|_{\widetilde{B}_\mu^{s+t-\frac{N}{2},r}}\leq
\|u\|_{\dot{B}_{2,1}^s}\|v\|_{\tilde{B}_\mu^{t,r}},\  \mathrm{if}\ s\leq\frac{N}{2},$$
$$\|\dot{R}(u,v)\|_{\widetilde{B}_\mu^{s+t-\frac{N}{2},r}}\leq
\|u\|_{\widetilde{B}_\mu^{s,r}}\|v\|_{\dot{B}_{2,1}^t},\  \mathrm{if}\ s+t>\max\{0,1-\frac{2}{r}\}.$$
\end{prop}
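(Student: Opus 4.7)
My plan is to apply Bony's paradifferential decomposition together with the spectral-support properties of the dyadic blocks, and to reduce each of the three inequalities to a discrete Young-type summation against the weights $2^{q'(s+t-N/2)}\max\{\mu,2^{-q'}\}^{1-2/r}$ defining $\|\cdot\|_{\widetilde{B}_\mu^{s+t-N/2,r}}$. Concretely, for a fixed $q'\in\mathbb{Z}$ I would apply $\dot{\Delta}_{q'}$ to the paraproduct or remainder, estimate the resulting $L^2$-norm by H\"older combined with Bernstein where needed, and only at the end multiply by the hybrid weight and sum in $q'$.

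For the two paraproduct bounds, the spectral support property gives that $\dot{\Delta}_{q'}(\dot{S}_{k-1}u\,\dot{\Delta}_k v)$ is nontrivial only for $|k-q'|\leq N_0$. The second inequality then follows from the crude estimate $\|\dot{S}_{k-1}u\|_{L^\infty}\lesssim 2^{k(N/2-s)}\|u\|_{\dot{B}^s_{2,1}}$, valid for $s\leq N/2$, after which the factor $\max\{\mu,2^{-q'}\}^{1-2/r}$ transfers cleanly onto the $v$ block and produces $\|v\|_{\widetilde{B}^{t,r}_\mu}$. The first inequality is subtler: I would expand $\dot{S}_{k-1}u=\sum_{j\leq k-2}\dot{\Delta}_j u$, apply Bernstein, and match each dyadic piece against the $\widetilde{B}^{s,r}_\mu$-weight of $u$. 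The estimate then collapses provided the transfer factor
$$2^{(j-k)(N/2-s)}\,\frac{\max\{\mu,2^{-k}\}^{1-2/r}}{\max\{\mu,2^{-j}\}^{1-2/r}}$$
is uniformly bounded in $j\leq k-2$; a case split according to the position of $2^{-j}$ and $2^{-k}$ relative to $\mu$ shows that uniform boundedness requires $s\leq N/2$ in the same-regime case and $s\leq N/2+1-2/r$ in the mixed regime, exactly yielding the stated hypothesis.

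The remainder estimate requires more care because $\dot{\Delta}_{q'}(\dot{\Delta}_k u\,\dot{\Delta}_{k'}v)$ can be nonzero whenever $q'\leq\max(k,k')+N_0$, so a single pair $(k,k')$ with $|k-k'|\leq 1$ feeds infinitely many output blocks. After using Bernstein to write
$$\|\dot{\Delta}_{q'}(\dot{\Delta}_k u\,\dot{\Delta}_{k'}v)\|_{L^2}\lesssim 2^{q'N/2}\|\dot{\Delta}_k u\|_{L^2}\|\dot{\Delta}_{k'}v\|_{L^2},$$
I would perform the summation over $q'$ first, splitting according to whether $2^{-q'}$ lies above or below $\mu$. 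The two resulting geometric series have convergence governed respectively by $s+t>1-2/r$ (low-frequency $q'$, where the weight is $2^{-q'(1-2/r)}$) and $s+t>0$ (high-frequency $q'$, where the weight is the constant $\mu^{1-2/r}$), and together they reproduce the threshold $s+t>\max\{0,1-2/r\}$; the remaining outer sum in $k$ then reduces to an $\ell^1\times\ell^1$ convolution that collapses directly into $\|u\|_{\widetilde{B}^{s,r}_\mu}\|v\|_{\dot{B}^t_{2,1}}$. The main obstacle is precisely this remainder bookkeeping, since high$\times$high frequency interactions produce arbitrarily low output frequencies and the hybrid weight $\max\{\mu,2^{-q'}\}^{1-2/r}$ cannot be absorbed locally; only after a careful case analysis in $q'$ relative to $\mu$ does the sharp threshold emerge. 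The paraproduct pieces, by contrast, are essentially frequency-localized and amount to pointwise weight comparisons.
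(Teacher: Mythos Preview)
Your sketch is sound and follows the standard route for such hybrid paraproduct estimates. Note, however, that the paper does not actually supply a proof of this proposition: it is quoted directly from Proposition~5.3 of Danchin~\cite{Da6}, and the detailed argument that appears immediately afterward in the paper is the proof of the neighboring Proposition~\ref{p2.5}, not of this one. Your approach---localize with $\dot{\Delta}_{q'}$, use the almost-orthogonality of the dyadic blocks together with Bernstein, and reduce to a weight-comparison of the form
\[
2^{(j-k)(N/2-s)}\,\frac{\max\{\mu,2^{-k}\}^{1-2/r}}{\max\{\mu,2^{-j}\}^{1-2/r}}
\]
for the paraproducts, and to a geometric $q'$-sum split at the threshold $2^{-q'}\sim\mu$ for the remainder---is precisely the method Danchin uses in \cite{Da6} and is also the template the present paper adopts for Proposition~\ref{p2.5}. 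In particular your case analysis recovers the correct restrictions $s\le N/2$ (same-regime) and $s\le N/2+1-2/r$ (mixed-regime) for the first inequality, and the threshold $s+t>\max\{0,1-2/r\}$ for the remainder. One small point worth making explicit in a full write-up: for the first paraproduct it is enough that the transfer factor be \emph{uniformly bounded} in $j\le k-2$ (not summable), since the $\ell^1$-summability is already carried by $\sum_j 2^{js}\max\{\mu,2^{-j}\}^{1-2/r}\|\dot{\Delta}_j u\|_{L^2}$; you state this correctly but it is the step most readers stumble on.
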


\begin{prop}\label{p2.5}
Let $s,t\in\Real$. There exists a constant $C$ such that
$$\|\dot{T}_uv\|_{\dot{B}_{2,1}^{s+t-\frac{N}{2}}}\leq
\|u\|_{\widetilde{B}_\mu^{s,\infty}}\|v\|_{\widetilde{B}_\mu^{t,1}},
\  \mathrm{if}\ s\leq\frac{N}{2},$$
$$\|\dot{T}_uv\|_{\dot{B}_{2,1}^{s+t-\frac{N}{2}}}
\leq
\|u\|_{\widetilde{B}_\mu^{s,1}}\|v\|_{\widetilde{B}_\mu^{t,\infty}},
\  \mathrm{if}\ s\leq\frac{N}{2}-1,$$
$$\|\dot{R}(u,v)\|_{\dot{B}_{2,1}^{s+t-\frac{N}{2}}}\leq
\|u\|_{\widetilde{B}_\mu^{s,\infty}}\|v\|_{\widetilde{B}_\mu^{t,1}}
,\  \mathrm{if}\ s+t>0.$$
\end{prop}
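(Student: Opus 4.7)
The plan is to adapt the classical Bony paraproduct and remainder estimates to the hybrid Besov framework of Definition \ref{D2.3}, treating the three inequalities in parallel. The tools are the usual ones: dyadic Fourier localization, Bernstein's inequalities ($L^2 \to L^\infty$ for the paraproducts and $L^1 \to L^2$ for the remainder), and exchange of the discrete summation order in $q$ and $j$. What is new is the need to track the weight $\max\{\mu,2^{-q}\}^{1-2/r}$ coming from the hybrid norm, matching factors of $\mu$ and $2^{-q}$ correctly across the two regimes separated by the transitional scale $q_\mu := \log_2(1/\mu)$.

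For the first paraproduct estimate I would start from the support identity $\dot{\Delta}_j(\dot{T}_u v) = \sum_{|j-q|\le 4} \dot{\Delta}_j\bigl(\dot{S}_{q-1}u\,\dot{\Delta}_q v\bigr)$, combine H\"older in $L^\infty \times L^2$ with the Bernstein bound $\|\dot{S}_{q-1}u\|_{L^\infty} \lesssim \sum_{q'\le q-2} 2^{q'N/2}\|\dot{\Delta}_{q'}u\|_{L^2}$, multiply by $2^{j(s+t-N/2)}$, and sum over $j$. The problem then reduces to bounding $\sum_q 2^{q(s+t-N/2)}\bigl(\sum_{q'\le q-2} 2^{q'N/2}\|\dot{\Delta}_{q'}u\|_{L^2}\bigr)\|\dot{\Delta}_q v\|_{L^2}$ by $\|u\|_{\widetilde{B}_\mu^{s,\infty}}\|v\|_{\widetilde{B}_\mu^{t,1}}$. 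Inserting the pointwise estimate $\|\dot{\Delta}_{q'}u\|_{L^2} \le \|u\|_{\widetilde{B}_\mu^{s,\infty}}\,2^{-q's}/\max\{\mu,2^{-q'}\}$ and analyzing the ratio $\max\{\mu,2^{-q}\}/\max\{\mu,2^{-q'}\}$ in the four cases determined by the positions of $q$ and $q'$ relative to $q_\mu$ yields the desired bound, the resulting geometric series converging exactly when $s \le N/2$. The second paraproduct estimate follows by the same recipe after swapping the hybrid weights that $u$ and $v$ carry; the sharper threshold $s \le N/2-1$ arises from balancing a $\min\{\mu^{-1},2^{q'}\}$ factor on $u$ against a $\max\{\mu,2^{-q}\}$ factor on $v$ in the summation range $q' \le q-2$.

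For the remainder, write $\dot{R}(u,v) = \sum_q \dot{\Delta}_q u\, \widetilde{\dot{\Delta}}_q v$ with $\widetilde{\dot{\Delta}}_q = \dot{\Delta}_{q-1}+\dot{\Delta}_q+\dot{\Delta}_{q+1}$; Fourier support restricts the $j$-block to indices $q \ge j-3$, and Bernstein's $L^1 \to L^2$ inequality gives $\|\dot{\Delta}_j(\dot{\Delta}_q u\,\widetilde{\dot{\Delta}}_q v)\|_{L^2} \lesssim 2^{jN/2}\|\dot{\Delta}_q u\|_{L^2}\|\dot{\Delta}_q v\|_{L^2}$. Exchanging the $j$ and $q$ sums converts $\sum_{j\le q+3} 2^{j(s+t)}$ into $\lesssim 2^{q(s+t)}$ precisely when $s+t > 0$, and inserting $\|\dot{\Delta}_q u\|_{L^2} \le \|u\|_{\widetilde{B}_\mu^{s,\infty}}\,2^{-qs}/\max\{\mu,2^{-q}\}$ turns the residual factor $1/\max\{\mu,2^{-q}\}$ into $\min\{\mu^{-1},2^q\}$, which is exactly the weight defining $\|v\|_{\widetilde{B}_\mu^{t,1}}$, closing the estimate with no case work. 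The main obstacle is the bookkeeping in the paraproduct estimates around $q_\mu$: one cannot simply replace $\widetilde{B}_\mu^{s,\infty}$ by $\dot{B}_{2,1}^s\cap\dot{B}_{2,1}^{s-1}$ and invoke Proposition \ref{p2.2}, since doing so would cost an unwanted factor of $\mu^{-1}$ at the high frequencies. The whole point of the hybrid norms is that the weight $\max\{\mu,2^{-q}\}$ absorbs this low/high frequency mismatch uniformly in $\mu$, and the proof is essentially a verification that this weight behaves as a well-behaved discrete convolution kernel on which Young-type summations close.
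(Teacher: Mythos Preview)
Your outline for the remainder $\dot{R}(u,v)$ is correct and matches the paper's argument essentially verbatim: after Bernstein and the exchange of sums, the pointwise bound on $\|\dot\Delta_q u\|_{L^2}$ converts the leftover factor into exactly the $\widetilde{B}_\mu^{t,1}$ weight on $v$, and no case analysis is needed.

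For the two paraproduct estimates, however, your proposed step ``insert the pointwise estimate $\|\dot\Delta_{q'}u\|_{L^2}\le \|u\|_{\widetilde B_\mu^{s,\infty}}\,2^{-q's}/\max\{\mu,2^{-q'}\}$ and sum'' does not close at the endpoints $s=\frac N2$ and $s=\frac N2-1$. The hybrid norm is an $\ell^1$ sum in $q$, so bounding each term by the full norm and then summing over the infinitely many $q'\le q-2$ turns the would-be geometric series into a divergent one at the endpoint (for $s=\frac N2$, the inner sum becomes $\sum_{q'\le q-2}\min\{\mu^{-1},2^{q'}\}$, which for $q\gg q_\mu$ is of order $(q-q_\mu)\mu^{-1}$, not $\mu^{-1}$). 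The four-case bookkeeping around $q_\mu$ that you describe works only for strict inequality on $s$.

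The paper avoids both this loss and the case analysis. It keeps the full dyadic sequences $2^{q''s}\max\{\mu,2^{-q''}\}\|\dot\Delta_{q''}u\|_{L^2}$ and $2^{q't}\max\{\mu,2^{-q'}\}^{-1}\|\dot\Delta_{q'}v\|_{L^2}$ intact and uses the single elementary inequalities
\[
\max\{\mu,2^{-q''}\}^{-1}\max\{\mu,2^{-q'}\}\le 1
\quad\text{and}\quad
\max\{\mu,2^{-q''}\}\max\{\mu,2^{-q'}\}^{-1}\le 2^{q'-q''}
\qquad (q''\le q'-2),
\]
valid uniformly in $\mu$ and without splitting into low/high frequency regimes. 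The problem then reduces to a discrete Young/convolution inequality with kernel $2^{m(N/2-s)}\mathbf 1_{m\le -2}$ (resp.\ $2^{m(N/2-1-s)}\mathbf 1_{m\le -2}$), whose $\ell^\infty$ norm is bounded precisely when $s\le \frac N2$ (resp.\ $s\le \frac N2-1$), recovering the endpoints. Replacing your pointwise insertion by this convolution argument fixes the gap.
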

\begin{proof}
From the definition of $\dot{T}_uv$, we can write
$$\dot{\Delta}_q\dot{T}_uv=\sum_{|q-q'|\leq 3}\dot{\Delta}_q
(\dot{S}_{q'-1}u\dot{\Delta}_{q'}v),$$ whence
\begin{eqnarray*}
  \|\dot{\Delta}_q\dot{T}_uv\|_{L^2}&\leq&
\sum_{\substack{|q'-q|\leq4 \\  q''\leq
q'-2}}\|\dot{\Delta}_{q''}u\|_{L^\infty}\|\dot{\Delta}_{q'}v\|_{L^2}\\
&\leq& \sum_{\substack{|q'-q|\leq4 \\  q''\leq
q'-2}}2^{q''\frac{N}{2}}\|\dot{\Delta}_{q''}u\|_{L^2}\|\dot{\Delta}_{q'}v\|_{L^2}\\
 &\leq&  \sum_{\substack{|q'-q|\leq4 \\
q''\leq q'-2}}\max\{\mu,
2^{-q''}\}\|\dot{\Delta}_{q''}u\|_{L^2}\cdot \max\{\mu,
2^{-q'}\}^{-1}\|\dot{\Delta}_{q'}v\|_{L^2}\\
&&\quad\quad\quad\quad\quad\times2^{q''\frac{N}{2}}\max\{\mu,
2^{-q''}\}^{-1}\cdot\max\{\mu, 2^{-q'}\}.
\end{eqnarray*}
It is now clear that
$$\max\{\mu,
2^{-q''}\}^{-1}\max\{\mu, 2^{-q'}\}\leq \max\{1, 2^{q''-q'}\}\leq
1.$$ So if $s\leq \frac{N}{2}$, the convolution inequality yields
$$\aligned
\sum_{q\in \mathbb{Z}}2^{q(s+t-\frac{N}{2})}\|\dot{\Delta}_q\dot{T}_uv\|_{L^2}
\leq C\|u\|_{\widetilde{B}_\mu^{s,\infty}}
\|v\|_{\widetilde{B}_\mu^{t,1}}.
\endaligned$$
For proving the second result, similarly, we notice that
$$\aligned
\|\dot{\Delta}_q\dot{T}_uv\|_{L^2}&\leq \sum_{\substack{|q'-q|\leq4
\\  q''\leq
q'-2}}\|\dot{\Delta}_{q''}u\|_{L^\infty}\|\dot{\Delta}_{q'}v\|_{L^2}\\
&\leq \sum_{\substack{|q'-q|\leq4 \\  q''\leq
q'-2}}2^{q''\frac{N}{2}}\|\dot{\Delta}_{q''}u\|_{L^2}\|\dot{\Delta}_{q'}v\|_{L^2}\\
 &\leq  \sum_{\substack{|q'-q|\leq4 \\
q''\leq q'-2}}\max\{\mu,
2^{-q''}\}^{-1}\|\dot{\Delta}_{q''}u\|_{L^2}\cdot \max\{\mu,
2^{-q'}\}\|\dot{\Delta}_{q'}v\|_{L^2}\\
&\quad\quad\quad\quad\quad\times2^{q''\frac{N}{2}}\max\{\mu,
2^{-q''}\}\cdot\max\{\mu, 2^{-q'}\}^{-1}.
\endaligned$$
For  $q''\leq q'-2$, we see that
$$\max\{\mu,
2^{-q''}\}\max\{\mu, 2^{-q'}\}^{-1}\leq 2^{q'-q''},$$
then
$$\aligned
\sum_{q\in
\mathbb{Z}}2^{q(s+t-\frac{N}{2})}\|\dot{\Delta}_q\dot{T}_uv\|_{L^2}
&\leq C\sum_{q\in
\mathbb{Z}}\sum_{\substack{|q'-q|\leq4 \\
q''\leq q'-2}}\max\{\mu,
2^{-q''}\}^{-1}2^{q''s}\|\dot{\Delta}_{q''}u\|_{L^2}\\
&\quad\times \max\{\mu,
2^{-q'}\}2^{q't}\|\dot{\Delta}_{q'}v\|_{L^2}2^{(q-q')t}2^{(q''-q')(\frac{N}{2}-s-1)}.
\endaligned$$
The convolution inequality implies
$$\sum_{q\in
\mathbb{Z}}2^{q(s+t-\frac{N}{2})}\|\dot{\Delta}_q\dot{T}_uv\|_{L^2}\leq
C\|u\|_{\widetilde{B}_\mu^{s,1}}
\|v\|_{\widetilde{B}_\mu^{t,\infty}},$$
 if $s\leq \frac{N}{2}-1.$

 To prove the
result on
 $\dot{R}(u,v)$, we note that
$$\dot{\Delta}_q\dot{R}(u,v)=\sum_{q'\geq q-2}(\dot{\Delta}_{q'}
u\tilde{\dot{\Delta}}_{q'}v).$$
This entails
$$\aligned
\|\dot{\Delta}_q\dot{R}(u,v)\|_{L^2}&\leq 2^{q\frac{N}{2}}
\sum_{q\leq
q'+2}\|\dot{\Delta}_{q'}u\|_{L^2}\|\tilde{\dot{\Delta}}_{q'}v\|_{L^2}\\
&\leq2^{q\frac{N}{2}} \sum_{q\leq
q'+2}\max\{\mu, 2^{-q'}\}\|\dot{\Delta}_{q'}u\|_{L^2}\min\{\mu^{-1}, 2^{q'}\}
\|\dot{\Delta}_{q'}v\|_{L^2}.
\endaligned$$
If $s+t>0$, then convolution inequality yields
$$\aligned
\sum_{q\in \mathbb{Z}}2^{q(s+t-\frac{N}{2})}\|\dot{\Delta}_q\dot{R}(u,v)
\|_{L^2}\leq C\|u\|_{\widetilde{B}_\mu^{s,\infty}}
\|v\|_{\widetilde{B}_\mu^{t,1}}.
\endaligned$$
\end{proof}

We will also use the so called Chemin-Lerner type spaces $\widetilde{L}^\rho_T(B_{p,r}^s)$
 which are described in detail in \cite{CL}. The case of nonhomogeneous Besov space can be defined in the same way.
\begin{defn}
For $\rho\in[1,+\infty]$, $s\in\Real$, and $T\in(0,+\infty]$, we get
$$\|u\|_{\widetilde{L}^\rho_T(\dot{B}_{2,r}^s)}=\left(\sum_{q\in \mathbb{Z}}2^{qs}\left(\int_0^T
\|\dot{\Delta}_qu(t)\|_{L^2}^\rho
dt\right)^{\frac{r}{\rho}}\right)^{\frac{1}{r}}$$ and denote by
$\widetilde{L}^\rho_T(\dot{B}_{2,r}^s)$ the subset of distributions
$u\in\mathscr{S}'(0,T)\times \mathscr{S}'_h(\mathbb{R}^N)$ with finite
$\|u\|_{\widetilde{L}^\rho_T(\dot{B}_{2,r}^s)}$ norm. When $T=+\infty$, the index
$T$ is omitted. We
further denote $\widetilde{C}_T(\dot{B}_{2,r}^s)=C([0,T];\dot{B}_{2,r}^s)\cap
\widetilde{L}^\infty_{T}(\dot{B}_{2,r}^s) $ and $\widetilde{L}^\rho_{T}(\dot{B}_{2,r}^s\cap
L^\infty)=\widetilde{L}^\rho_T(\dot{B}_{2,r}^s)\cap L^\rho_T(L^\infty).$
 \end{defn}

\section{The linearized equations}
\subsection{The transport equation}
Here, we present a priori estimate for the linear transport equation
which has been stated in \cite{BCD} (Theorem 3.14).
\begin{prop}\label{p3.1}
Let $1\leq p\leq p_1\leq \infty$, $r\in[1,+\infty]$, $T>0$ and $\frac{1}{p'}:=1-\frac{1}{p}$. Assume that
$$\left\{
  \begin{array}{ll}
    s>-N\min(\frac{1}{p_1},\frac{1}{p'}),\quad  &\hbox{if}\quad \mathrm{div} u\neq0, \\
    s>-1-N\min(\frac{1}{p_1},\frac{1}{p'}),\quad  &\hbox{if}\quad \mathrm{div} u=0.
  \end{array}
\right.$$ Suppose $a_0\in B_{p,r}^{s}$, $g\in
{L}^1(0,T;B_{p,r}^{s})$ and that $a\in
{L}^\infty(0,T;B_{p,r}^{s})$
solves
\begin{equation}\label{e3.0}\nonumber
\left\{
 \begin{array}{ll}
 \partial_ta+\mathrm{div}(ua)=g,\\
a|_{t=0}=a_0.
 \end{array}
  \right.
\end{equation}
Then there exists a constant $C$ depending only on $s,p,p_1,r$ such that the following
 inequality holds, if $t\in [0,T]$,
 $$\|a\|_{\widetilde{L}_t^\infty(B_{p,r}^{s})}\leq e^{CV(t)}\Big(\|a_0\|_{B_{p,r}^{s}}
+\int_0^te^{-CV(\tau)}\|g(\tau)\|_{B_{p,r}^{s}}d\tau\Big),$$
with  $\left\{
        \begin{array}{ll}
         V(t)=\int_0^t\|\nabla u(\tau)\|_{B_{p_1,r}^{\frac{N}{p_1}}\cap L^\infty}d\tau,
 & \hbox{if} \quad s<1+\frac{N}{p_1},\\
        V(t)=\int_0^t\|\nabla u(\tau)\|_{B_{p_1,r}^{s-1}}d\tau,
& \hbox{if} \quad s>1+\frac{N}{p_1}\ \hbox{or}\ s= 1+\frac{N}{p_1}, r=1.\\
        \end{array}
      \right.$
\end{prop}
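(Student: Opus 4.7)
The plan is to localize the transport equation in frequency via Littlewood--Paley, derive a blockwise $L^p$ energy bound, control the resulting commutator via paradifferential calculus, and close the estimate with Gronwall in the Chemin--Lerner norm. Applying the dyadic block $\Delta_q$ to $\partial_t a + \mathrm{div}(ua) = g$ and writing $\mathrm{div}(ua) = u\cdot \nabla a + a\,\mathrm{div}\, u$, I obtain
\begin{equation*}
\partial_t \Delta_q a + u\cdot\nabla \Delta_q a = \Delta_q g - R_q, \qquad R_q := [\Delta_q, u\cdot\nabla]\,a + \Delta_q(a\,\mathrm{div}\, u).
\end{equation*}
When $\mathrm{div}\, u = 0$ the second term in $R_q$ disappears, which is the source of the one-derivative improvement in the threshold on $s$ stated in the proposition.

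Next, I would carry out the standard blockwise $L^p$ energy estimate: multiplying by $|\Delta_q a|^{p-2}\Delta_q a$ (with the usual limiting argument at $p = 1$ and $p = \infty$) and integrating by parts, the transport term contributes $\tfrac{1}{p}\int |\Delta_q a|^p \,\mathrm{div}\, u\,dx$, so that
\begin{equation*}
\tfrac{d}{dt}\|\Delta_q a\|_{L^p} \le \tfrac{1}{p}\|\mathrm{div}\, u\|_{L^\infty}\,\|\Delta_q a\|_{L^p} + \|\Delta_q g\|_{L^p} + \|R_q\|_{L^p}.
\end{equation*}
Integrating in time over $[0,t]$ and weighting by $2^{qs}$ produces an inequality in which only the commutator $R_q$ requires nontrivial work.

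The main obstacle is bounding $R_q$, and it is precisely here that the regularity thresholds on $s$ enter. I would use Bony's decomposition to split both $[\Delta_q, u\cdot\nabla]\,a$ and $\Delta_q(a\,\mathrm{div}\, u)$ into paraproducts and remainders. For the paraproduct pieces, the classical first-order commutator trick applied to the kernel of $\Delta_q$ provides the gain of one derivative that recovers $\nabla u$ from $u$. For the remainder pieces, $\ell^r$-summability of the $2^{qs}$-weighted bounds in $q$ forces the high--high interaction to converge, which is exactly the content of the lower bound $s > -N\min(1/p_1,1/p')$ (or its one-derivative improvement in the divergence-free case). The dichotomy in the definition of $V(t)$ is a Moser-type alternative: for $s < 1 + N/p_1$ one applies the norm of $\nabla u$ in $B_{p_1,r}^{N/p_1}\cap L^\infty$, while for larger $s$ one must use the stronger norm $B_{p_1,r}^{s-1}$ to absorb the derivative loss in the high-frequency regime. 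The outcome is a pointwise-in-time bound
\begin{equation*}
2^{qs}\|R_q(\tau)\|_{L^p} \le C\, c_q(\tau)\, V'(\tau)\, \|a(\tau)\|_{B_{p,r}^s}, \qquad \|(c_q(\tau))_q\|_{\ell^r(\mathbb{Z})} \le 1.
\end{equation*}

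Finally, taking the $\ell^r$-norm over $q$ of the time-integrated blockwise bound and invoking Minkowski's inequality to commute the time integral with the $\ell^r$-sum, I arrive at
\begin{equation*}
\|a\|_{\widetilde{L}_t^\infty(B_{p,r}^s)} \le \|a_0\|_{B_{p,r}^s} + \int_0^t \|g(\tau)\|_{B_{p,r}^s}\,d\tau + C\int_0^t V'(\tau)\,\|a\|_{\widetilde{L}_\tau^\infty(B_{p,r}^s)}\,d\tau,
\end{equation*}
and Gronwall's lemma yields the announced exponential-in-$V(t)$ bound. The low-frequency block $\Delta_{-1}$ proper to the nonhomogeneous setting is treated separately by a direct $L^p$ estimate, using the embedding $B_{p_1,r}^{N/p_1}\hookrightarrow L^\infty$ and a standard product bound; it introduces no new obstruction and merges cleanly into the Gronwall argument.
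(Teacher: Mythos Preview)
Your proposal is correct and follows exactly the standard argument (frequency localization, $L^p$ energy on each block, paradifferential commutator bound, Gronwall in the Chemin--Lerner norm). The paper itself does not give a proof of this proposition at all: it simply quotes the statement from \cite{BCD}, Theorem~3.14, so your outline is in fact the proof the cited reference supplies, and there is nothing further to compare.
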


\subsection{The linearized momentum equation}

When the density is close to a constant, we are led to study the following linearized momentum equations:
\begin{equation}\label{e3.1}
\left\{
 \begin{array}{ll}
 \partial_tu+v\cdot\nabla u-\mu b\Delta u+b\nabla \Pi=f,\\
 \hbox{div}u=0,\\
 u|_{t=0}=u_0,
 \end{array}
  \right.
\end{equation}
where $b:=a+1$ is bounded below by a positive constant
$\underline{b}$. That is
$\inf\limits_{x\in\Real^N}b(x)\geq\underline{b}$. Before stating our
result, let us introduce the following notation:
$$A_T=1+\underline{b}2^{N_0\alpha}\|\nabla b\|_{{B}_{2,1}^{\frac{N}{2}-1}}$$
for $\alpha\in(0,1).$
\begin{prop}\label{p3.2}
Let $s\in (1-\frac{N}{2},1+\frac{N}{2})$ and $0<\alpha<1$.
Also we assume $\alpha<\frac{s-1}{2}$ if $s>1$ and $a_0\in
{B}^{\frac{N}{2}}_{2,1}$. Let $u_0$ be a divergence-free vector field
with coefficients in ${B}_{2,r}^{s-1}$ for $r\in[1,\infty]$, and $f$ be a time-dependent
vector field with coefficients in $\widetilde{L}^1_T(B_{2,r}^{s-1})$. $u,v$
are two divergence-free time-dependent vector fields such that
$\nabla v\in L^1(0,T;B_{2,1}^{\frac{N}{2}})$ and $u\in
\widetilde{C}([0,T];B_{2,r}^{s-1)}\cap \widetilde{L}^1_T(B_{2,r}^{s+1})$. In addition,
assume that (\ref{e3.1}) is fulfilled for some distribution $\Pi$.
Let $N_0$ be a positive integer such that $b_{N_0}=1+S_{N_0}a$ satisfies
$$\inf\limits_{x\in\Real^N}b_{N_0}\geq\frac{1}{2}\underline{b}.$$

Denoting $\underline{\mu}:=\mu\inf\limits_{x\in\Real^N}(a+1),$ then
there exists a constant $C=C(s,N,\mu,\underline{\mu})$ such that  if
additionally,
$$C A_T^{\kappa+1}\|a-S_{N_0}a\|_{\widetilde{L}_T^\infty(B_{2,1}^{\frac{N}{2}})}\leq
\min\{\frac{1}{4}\underline{b},\ \frac{1}{4\mu}\underline{\mu}\},$$
 the following estimate holds for $k=\frac{|s-1|}{\alpha}$,
\begin{align}\label{e3.2}
\begin{split}
\|u\|&_{\widetilde{L}_T^\infty(B_{2,r}^{s-1})}+\underline{\mu}\|u\|_{\widetilde{L}_T^1(B_{2,r}^{s+1})}
+\|\nabla\Pi\|_{\widetilde{L}_T^1(B_{2,r}^{s-1})}\\
&\leq Ce^{CV(T)}\Big
(\|u_0\|_{B_{2,r}^{s-1}}+A_T^k(\|f\|_{L^1_T(B_{2,1}^{s-1})} +\mu
A_T\| u\|_{L_T^1(B_{2,r}^{s+1-\alpha})})\Big),
\end{split}
\end{align}
with $V(t)=\int_0^t(\|\nabla v(\tau)\|_{B_{2,1}^{\frac{N}{2}}}
+2^{2N_0}\|a\|^{\frac{2}{\alpha}}_{{B}_{2,1}^{\frac{N}{2}}})d\tau.$
\end{prop}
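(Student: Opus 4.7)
The plan is to follow the paradifferential approach used by R.~Danchin for the density-dependent barotropic Navier--Stokes system. First I would split the variable coefficient $b=1+a$ into a smooth low-frequency part $b_{N_0}:=1+S_{N_0}a$ and a high-frequency remainder $\tilde b:=a-S_{N_0}a$. By the hypothesis on $a$ one has $b_{N_0}\geq \tfrac{1}{2}\underline{b}$ pointwise, while $\|\tilde b\|_{\widetilde L^\infty_T(B_{2,1}^{N/2})}$ is so small (uniformly in $T$) that its contributions to $b\Delta u$ and $b\nabla\Pi$ will eventually be absorbed on the left-hand side. Rewriting (\ref{e3.1}) yields a perturbed equation whose leading variable coefficient is $b_{N_0}$ and whose forcing now contains $\mu\tilde b\Delta u-\tilde b\nabla\Pi$.

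Next I would recover the pressure from an elliptic equation obtained by taking the divergence of (\ref{e3.1}) and using $\hbox{div}\,u=0$:
$$\hbox{div}(b\nabla\Pi)=\hbox{div}\,f-\hbox{div}(v\cdot\nabla u)+\mu\,\nabla b\cdot\Delta u.$$
Since $b$ is a small high-frequency perturbation of $b_{N_0}$, this operator is invertible after a Neumann-series style argument, and the paraproduct estimates of Proposition~\ref{p2.2} give a bound on $\|\nabla\Pi\|_{\widetilde L^1_T(B^{s-1}_{2,r})}$ in terms of $\|f\|$, $\|v\cdot\nabla u\|$ and $\|\nabla b\cdot\Delta u\|$ (with the last factor controlled by $\|u\|_{L^1_T(B_{2,r}^{s+1-\alpha})}$ after allowing an $\alpha$-loss on the derivative of $u$). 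This is where the constant $A_T$ first enters, through the factor $\underline{b}\,2^{N_0\alpha}\|\nabla b\|_{B_{2,1}^{N/2-1}}$.

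I would then run a localized energy estimate on the velocity: apply $\dot\Delta_q$ to (\ref{e3.1}), test against $\dot\Delta_q u$, and integrate by parts in the viscous term. The coercivity of $b_{N_0}$ produces
$$\tfrac12\tfrac{d}{dt}\|\dot\Delta_q u\|_{L^2}^{2}+\tfrac{\underline\mu}{2}\|\nabla\dot\Delta_q u\|_{L^2}^{2}\le (\hbox{commutators})+(\hbox{forcing}).$$
The delicate term is the viscous commutator $[b_{N_0}\Delta,\dot\Delta_q]u$, where the extra derivative is compensated by $\|\nabla b_{N_0}\|_{L^\infty}\lesssim 2^{N_0}\|a\|_{B_{2,1}^{N/2}}$; after multiplication by $2^{q(s-1)}$, $\ell^r$-summation, and a quadratic absorption through Young's inequality, this produces precisely the exponential factor $2^{2N_0}\|a\|^{2/\alpha}_{B_{2,1}^{N/2}}$ appearing in $V(t)$, while the standard transport commutator $[v\cdot\nabla,\dot\Delta_q]u$ contributes the remaining piece $\|\nabla v\|_{B_{2,1}^{N/2}}$. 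Gr\"onwall's lemma then yields an estimate of the desired shape, but with a loss of regularity: the right-hand side contains $\mu A_T\|u\|_{L^1_T(B_{2,r}^{s+1-\alpha})}$ rather than $\|u\|_{L^1_T(B_{2,r}^{s+1})}$.

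The final step is to close the estimate in spite of this $\alpha$-loss. Because $s-1\in(-N/2,N/2)$, one may bootstrap the preceding inequality, at each iteration interpolating the remainder between the datum and the full $B_{2,r}^{s+1}$-norm; every iteration costs one factor $A_T$ and shrinks the gap by $\alpha$, so after $k=|s-1|/\alpha$ iterations the gap is eliminated, producing the exponent $A_T^{k+1}$ in (\ref{e3.2}). The main obstacle throughout is precisely the variable-coefficient parabolic term $\mu b\Delta u$ coupled with the variable-coefficient pressure term $b\nabla\Pi$: the Bony splitting into $b_{N_0}+\tilde b$ and the careful balance between the threshold $N_0$, the parameter $\alpha$, the smallness of $\tilde b$ and the coercivity $\underline{b}/2$ are exactly what dictates the precise shape of $A_T$ and of $V(t)$.
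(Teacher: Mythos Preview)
Your overall architecture --- split $b=1+a$ into $b_{N_0}=1+S_{N_0}a$ plus a small high-frequency tail, run a localized $L^2$ energy estimate on $\Delta_q u$, treat the transport and viscous commutators, estimate the pressure through its elliptic equation, and apply Gr\"onwall --- is the same as the paper's. However, your last paragraph misreads both the statement and the mechanism producing $A_T^{k}$.

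First, look again at (\ref{e3.2}): the term $\mu A_T\|u\|_{L^1_T(B^{s+1-\alpha}_{2,r})}$ \emph{remains} on the right-hand side. No bootstrap is performed in this proposition to ``eliminate the gap''; the $\alpha$-loss is deliberately kept and is only absorbed later, in the a~priori estimates of Section~4, by choosing $T$ small (via interpolation and $T^{\alpha/2}$).

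Second, the power $A_T^{k}$ with $k=|s-1|/\alpha$ does not arise from iterating the full velocity inequality --- that inequality already carries the factor $e^{CV(T)}$, so iterating it would pile up exponentials rather than powers of $A_T$. In the paper the power comes entirely from the \emph{pressure} estimate. After localizing $\hbox{div}(b_{N_0}\nabla\Pi)=\hbox{div}L-\hbox{div}\bigl((a-S_{N_0}a)\nabla\Pi\bigr)$ and handling the commutator (Lemma~B.1 in \cite{Da5}), one obtains for $-\frac{N}{2}<\sigma\le\frac{N}{2}$
\[
\underline{b}\,\|\nabla\Pi\|_{B^{\sigma}_{2,r}}\lesssim\|\mathcal{Q}L\|_{B^{\sigma}_{2,r}}
+\bigl(2^{N_0\alpha}\|a\|_{B^{N/2}_{2,1}}+\underline{b}\bigr)\|\nabla\Pi\|_{B^{\sigma-\alpha}_{2,r}}.
\]
The device that closes this self-referential inequality is interpolation,
$\|\nabla\Pi\|_{B^{\sigma-\alpha}}\le \|\nabla\Pi\|_{B^{\sigma}}^{(\sigma-\alpha)/\sigma}\|\nabla\Pi\|_{L^2}^{\alpha/\sigma}$,
combined with the \emph{$L^2$ bound on $\nabla\Pi$} (Proposition~A.1 in \cite{Da5}); Young's inequality then gives $\underline{b}\,\|\nabla\Pi\|_{B^{\sigma}}\lesssim A_T^{|\sigma|/\alpha}\|\mathcal{Q}L\|_{B^{\sigma}}$, and taking $\sigma=s-1$ produces exactly $A_T^{k}$. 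This $L^2$ anchor is what your ``Neumann-series style'' remark needs to be sharpened into; without it the recursion does not terminate and no finite power of $A_T$ emerges.
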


\begin{proof}
For positive integer $N_0$, we rewrite (\ref{e3.1}) as
\begin{equation}\label{e3.4}
\left\{
 \begin{array}{ll}
 \partial_tu+v\cdot\nabla u-\mu b_{N_0}\Delta u+b\nabla \Pi=f+E_{N_0},\\
 \hbox{div}u=0,\\
 u|_{t=0}=u_0,
 \end{array}
  \right.
\end{equation}
with $E_{N_0}=\mu (a-S_{N_0}a)\Delta u$ and $b_{N_0}=1+S_{N_0}a.$

Applying the operator ${\Delta}_q$ to (\ref{e3.4}), denoting
$u_q={\Delta}_qu$ and $\Pi_q={\Delta}_q\Pi$, then we have
$$\aligned
\partial_t&u_q +v\cdot \nabla u_q-\mu\mathrm{div}(b_{N_0}\nabla u_q)+\nabla\Pi_q\\
&=f_q-{\Delta}_q(a\nabla \Pi)+[v,{\Delta}_q]\cdot\nabla
u+R_q+{\Delta}_qE_{N_0},
\endaligned$$
where $R_q:=\mu{\Delta}_q(S_{N_0}a\Delta u)-\mu\mathrm{div}(S_{N_0}a\nabla
u_q).$ Multiplying this equation by $u_q$, integrating by parts over
$\Real^N$ yields
\begin{align}\label{e3.5}
\begin{split}
\frac{d}{dt}\|u_q\|_{L^2}^2&+\underline{\mu}\|\nabla
u_q\|_{L^2}^2\leq C\|u_q\|_{L^2}
(\|f_q\|_{L^2}+\|[v,{\Delta}_q]\cdot\nabla u\|_{L^2}\\
&+\|R_q\|_{L^2}+\|{\Delta}_qE_{N_0}\|_{L^2})-2\int_{\Real^N}{\Delta}_q(a\nabla
\Pi)u_qdx,
\end{split}
\end{align}
with $\underline{\mu}:=\mu\underline{b}.$

Denoting  $\widetilde{a}:=a-\Delta_{-1}a$, since $\mathrm{div}u_q=0$, the last term of
(\ref{e3.5}) can be written as
$$2\int_{\Real^N}{\Delta}_q(a\nabla \Pi)u_qdx=2\int_{\Real^N}{\Delta}_q(-{T}_{\nabla \widetilde{a}} \Pi+
{T}_{\nabla_{\Pi}}a+{R}(a,\nabla \Pi)+{T}_{ \Delta_{-1}{\nabla a}} \Pi)u_qdx$$
by integration by parts and Bony's decomposition.
According to Bernstein inequality, there exists a $\kappa_0>0$ such that for all $q\geq0$,
 we have $\kappa_0 2^q\|{\Delta}_q u\|_{L^2}\leq\|{\Delta}_q\nabla u\|_{L^2}$. Integrating
 over $[0,T]$, (\ref{e3.5}) implies
$$\aligned
\|u_q\|_{L^2}&+\underline{\mu}2^{2q}\int_0^T\|u_q\|_{L^2}dt\\
&\leq C\|{\Delta_q}u_{0}\|_{L^2}+C\int_0^T\Big(\|f_q\|_{L^2}+\|{\Delta}_qE_{N_0}\|_{L^2}
+\underline{\mu}\delta_{-1,q}\|\Delta_{-1}u\|_{L^2}\\
&\ \ \ +\|[v,{\Delta}_q]\cdot\nabla
u\|_{L^2}+\|R_q\|_{L^2}+\|{\Delta}_q(T_{\nabla \widetilde{a}}
\Pi)\|_{L^2} +\|{\Delta}_q({T}_{\Delta_{-1}
\nabla a}\Pi)\|_{L^2}\\
&\ \ \ +\|{\Delta}_q(T_{\nabla
\Pi}a)\|_{L^2} +\|{\Delta}_q{R}(a,\nabla \Pi)\|_{L^2}\Big)dt
\endaligned$$
for all $q\geq-1$. Elementary computations yield
\begin{eqnarray}
&&\|u\|_{\widetilde{L}_T^\infty(B_{2,r}^{s-1})}+\underline{\mu}\|u\|_{\widetilde{L}_T^1(B_{2,r}^{s+1})}
\nonumber\\
&\leq&
C\|u_0\|_{B_{2,r}^{s-1}}+\underline{\mu}\|u\|_{\widetilde{L}^1_T(B^{s+1-\alpha}_{2,r})}
+\Big(\sum_{q\in\mathbb{Z}}2^{qr(s-1)}\int_0^T\Big(\|f_q \|_{L^2}\nonumber\\
&& +\|R_q\|_{L^2} +\|[v,{\Delta}_q]\nabla
u\|_{L^2}+\|{\Delta}_q(T_{\nabla
\widetilde{a}}\Pi)\|_{L^2}+\|{\Delta}_q({T}_{\nabla
\Pi}a)\|_{L^2}\nonumber\\
&& +\|{\Delta}_q{R}(a,\nabla\Pi)\|_{L^2}+\|{\Delta}_qE_{N_0}\|_{L^2}+\|{\Delta}_q({T}_{\Delta_{-1}
\nabla a}\Pi)\|_{L^2}\Big)^rdt\Big)^{\frac{1}{r}}\nonumber\\
&\leq&
C\|u_0\|_{B_{2,r}^{s-1}}+\underline{\mu}\|u\|_{\widetilde{L}^1_T(B^{s+1-\alpha}_{2,r})}
+\|f\|_{\widetilde{L}_T^1(B_{2,r}^{s-1})}+\sum_{i=1}^{7}I_i.
\label{e3.6}
\end{eqnarray}
Now, we estimate the series of the right hand side of (\ref{e3.6}) term by term. \\

As $1-\frac{N}{2}<s<1+\frac{N}{2}$, we can use Lemma B.2 in
\cite{Da5} to bound $I_2$. Indeed,
\begin{align}\label{e3.7}
   \begin{split}
   I_2&=\Big(\sum_{q\geq-1}2^{qr(s-1)}\int_0^T\|[v,{\Delta}_q]\nabla u\|^r_{L^2}dt\Big)^{\frac{1}{r}}\\
   &\leq C\int_0^T\|\nabla v\|_{B_{2,1}^{\frac{N}{2}}}
   \|u\|_{B_{2,r}^{s-1}}dt.
   \end{split}
   \end{align}

 By Lemma B.3 in \cite{Da5}, the following estimate
   \begin{align}\label{e3.8}
   \begin{split}
   I_1&=\Big(\sum_{q\geq-1}2^{qr(s-1)}\int_0^T\|R_q\|^r_{L^2}dt\Big)^{\frac{1}{r}}\\
   &\leq C\mu\|S_{N_0}a\|_{\widetilde{L}_T^\infty (B_{2,1}^{\frac{N}{2}+\alpha})}\| u\|_{\widetilde{L}_T^1(B_{2,r}^{s+1-\alpha})}\\
   &\leq C\mu2^{N_0\alpha}\|S_{N_0}a\|_{\widetilde{L}_T^\infty (B_{2,1}^{\frac{N}{2}})}\|u\|_{\widetilde{L}_T^1 (B_{2,r}^{s+1-\alpha})}
   \end{split}
   \end{align} is proved.

 By the definition of Bony decomposition, we see that for any two functions $f$ and
 $g$
 \begin{align}\nonumber
   \begin{split}
  2^{q(s-1)}\|T_{\nabla f}g\|_{L^2}&\leq C\sum_{\substack{|q'-q|\leq4 \\  q''\leq
q'-2}}2^{q(s-1)}\|\Delta_{q''}\nabla f\|_{L^2}2^{q''\frac{N}{2}}\|\Delta_{q'}g\|_{L^2}\\
&\leq C\sum_{\substack{|q'-q|\leq4 \\  q''\leq
q'-2}}\|\Delta_{q''}\nabla f\|_{L^2}2^{q''(\frac{N}{2}+\alpha-1)}\cdot2^{q'(s-1-\alpha)}\|\Delta_{q'}\nabla g\|_{L^2}\\
&\quad\quad\quad\quad\quad\quad\quad\quad\times2^{(q-q')(s-1)}2^{(q''-q')(1-\alpha)}.
   \end{split}
   \end{align}
   Thus by the convolution inequality, $I_3$  is estimated by
    \begin{align}\nonumber
   \begin{split}
  I_3&\leq C\| a-S_{N_0}a\|_{\widetilde{L}_T^\infty({B}_{2,1}^{\frac{N}{2}})}\|\nabla \Pi\|_{\widetilde{L}_T^1(B_{2,r}^{s-1})}\\
  &\quad+C\|S_{N_0}a\|_{\widetilde{L}_T^\infty({B}_{2,r}^{\frac{N}{2}+\alpha})}
  \|\nabla \Pi\|_{\widetilde{L}_T^1(B_{2,r}^{s-1-\alpha})},
   \end{split}
   \end{align}
  where we have used the above estimate with
  $f=S_{N_0}\widetilde{a}$ and $g=\Pi.$

Form Proposition \ref{p2.2}, Theorem 2.82 and Theorem 2.85 in \cite{BCD},  $I_4+I_5$ is bounded by
 \begin{eqnarray}
  I_4+I_5&\leq& C\| a-S_{N_0}a\|_{\widetilde{L}_T^\infty({B}_{2,1}^{\frac{N}{2}})}\|\nabla \Pi\|_{\widetilde{L}_T^1(B_{2,r}^{s-1})}\nonumber\\
  &&+C\|S_{N_0}a\|_{\widetilde{L}_T^\infty({B}_{2,1}^{\frac{N}{2}+\alpha})}\|\nabla \Pi\|_{\widetilde{L}_T^1(B_{2,r}^{s-1-\alpha})}.
   \label{e3.9}
   \end{eqnarray}

As to $I_6$, also the standard continuity result for para-product implies
\begin{align}\label{e3.10}
   \begin{split}
   I_6\leq C\| a-S_{N_0}a\|_{\widetilde{L}_T^\infty({B}_{2,1}^{\frac{N}{2}})}\| u\|_{\widetilde{L}_T^1(B_{2,r}^{s+1})}.
   \end{split}
   \end{align}

Obviously, Theorem 2.82 and Theorem 2.85 in \cite{BCD}, implies
\begin{align}\nonumber
   \begin{split}
   I_7&\leq C\| a-S_{N_0}a\|_{\widetilde{L}_T^\infty({B}_{2,1}^{\frac{N}{2}})}\|\nabla \Pi\|_{\widetilde{L}_T^1(B_{2,r}^{s-1})}\\
  &\quad+C\|S_{N_0}a\|_{\widetilde{L}_T^\infty({B}_{2,1}^{\frac{N}{2}+\alpha})}\|\nabla \Pi\|_{\widetilde{L}_T^1(B_{2,r}^{s-1-\alpha})}.
   \end{split}
   \end{align}
Thus, combining the above estimates for $I_1$ to $I_6$, we obtain
\begin{align}\label{e3.11}
   \begin{split}
   &\|u\|_{\widetilde{L}_T^\infty(B_{2,r}^{s-1})}+\underline{\mu}\|u\|_{\widetilde{L}_T^1(B_{2,r}^{s+1})}\\
&\lesssim\|u_0\|_{B_{2,r}^{s-1}}
+\|f\|_{\widetilde{L}_T^1(B_{2,r}^{s-1})}+\mu\| a-S_{N_0}a\|_{\widetilde{L}_T^\infty({B}_{2,1}^{\frac{N}{2}})}
\| u\|_{\widetilde{L}_T^1(B_{2,r}^{s+1})}\\
&\ \ \ +\int_0^T\Big(\|\nabla
v\|_{B_{2,1}^{\frac{N}{2}}}\|
u\|_{B_{2,r}^{s-1}}+\mu2^{N_0\alpha}\|S_{N_0}a\|_{\widetilde{L}_T^\infty({B}_{2,1}^{\frac{N}{2}})}
\|u\|_{B_{2,r}^{s+1-\alpha}}\Big)dt\\
&\ \ \ +\| a-S_{N_0}a\|_{\widetilde{L}_T^\infty({B}_{2,1}^{\frac{N}{2}})}\|\nabla \Pi\|_{\widetilde{L}_T^1(B_{2,r}^{s-1})}
+\underline{\mu}\|u\|_{\widetilde{L}^1_T(B^{s+1-\alpha}_{2,r})}\\
&\quad+\|S_{N_0}a\|_{\widetilde{L}_T^\infty({B}_{2,1}^{\frac{N}{2}+\alpha})}\|\nabla \Pi\|_{\widetilde{L}_T^1(B_{2,r}^{s-1-\alpha})}.
   \end{split}
   \end{align}

On the other hand, $\nabla \Pi$ solves the following elliptic equation:
$$\hbox{div}(b_{N_0}\nabla\Pi)=\hbox{div}L-F_{N_0},$$
with $L=f+\mu a\Delta u-v\cdot\nabla u$ and $F_{N_0}=\mathrm{div}((a-S_{N_0}a)\nabla\Pi)$.
Apply ${\Delta}_q$ to the above equation we get
 \begin{equation}\label{e3.12}
 \hbox{div}(b_{N_0}\nabla\Pi_q)=\hbox{div}L_q-{\Delta}_qF_{N_0}+\tilde{R}_q,
\end{equation}
with
$\tilde{R}_q=\hbox{div}(b_{N_0}\nabla\Pi_q)-{\Delta}_q\hbox{div}(b_{N_0}\nabla\Pi)$.
Multiplying (\ref{e3.12}) by $\Pi_q$ and integrating by parts, we
obtain
 \begin{equation}\label{e3.13}
\underline{b}\|\nabla \Pi_q\|^2_{L^2}\leq
\Big(\|\hbox{div}L_q\|_{L^2}+\|{\Delta}_qF_{N_0}\|_{L^2}+\|\tilde{R}_q\|_{L^2}\Big)\|\Pi_q\|_{L^2}.
\end{equation}
Bernstein inequality implies that
$$\underline{b}2^q\|\nabla \Pi_q\|_{L^2}\leq C
\Big(\|\hbox{div}L_q\|_{L^2}+\|{\Delta}_qF_{N_0}\|_{L^2}+\|\tilde{R}_q\|_{L^2}+\underline{b}\|\Delta_{-1}\Pi\|\Big).$$
For $-\frac{N}{2}<\sigma\leq \frac{N}{2}$, the second term can be estimated by
$$\sum\limits_{q\geq-1}2^{q(\sigma-1)}\|\Delta_qF_{N_0}\|_{L^2}\leq C\|a-S_{N_0}a\|_{B_{2,1}^{\frac{N}{2}}}\|\nabla \Pi\|_{B_{2,r}^{\sigma}}.$$
Then from the assumption,
$$C\| a-S_{N_0}a\|_{\widetilde{L}_T^\infty({B}_{2,1}^{\frac{N}{2}})}\leq\frac{1}{4}\underline{b}$$
 and Lemma B.1 in \cite{Da5}, for $\alpha\in(0,1)$, we have
 \begin{equation}\label{e3.14}
 \underline{b}\|\nabla \Pi\|_{B_{2,r}^{\sigma}}\lesssim
\|\mathcal {Q}L\|_{B_{2,r}^{\sigma}}+\|S_{N_0}a\|_{{B}_{2,1}^{\frac{N}{2}
+\alpha}}\|\nabla \Pi\|_{B_{2,r}^{\sigma-\alpha}}+\underline{b}\|\nabla \Pi\|_{B_{2,r}^{\sigma-\alpha}},
\end{equation}
 where $$\mathcal {Q}=\nabla(-\Delta)^{-1}\hbox{div},\ \  \underline{b}
 =\mu\inf\limits_{x\in\Real^N}(S_{N_0}a+1)\ \ \mathrm{and}\ -\frac{N}{2}<\sigma\leq\frac{N}{2}.$$

 If $\sigma$ satisfies that $\alpha<\sigma\leq\frac{N}{2}$, by interpolation, we get that
\begin{equation}\label{e3.15}
\begin{split}
 \underline{b}\|\nabla \Pi\|_{B_{2,r}^{\sigma}}&\lesssim
\|\mathcal {Q}L\|_{B_{2,r}^{\sigma}}+(\|S_{N_0}a\|_{{B}_{2,1}^{\frac{N}{2}
+\alpha}}+\underline{b})\|\nabla \Pi\|_{B_{2,r}^{\sigma-\alpha}}\\
&\lesssim\|\mathcal {Q}L\|_{B_{2,r}^{\sigma}}+(2^{N_0\alpha}\|S_{N_0}a\|_{{B}_{2,1}^{\frac{N}{2}}}+\underline{b})
\|\nabla \Pi\|^{\frac{\sigma-\alpha}{\sigma}}_{B_{2,r}^{\sigma}}\|\nabla \Pi\|^{\frac{\alpha}{\sigma}}_{L^2}.
\end{split}
\end{equation}
We conclude by Young's inequality and the $L^2$ estimate for the
pressure in Proposition A.1 in \cite{Da5} that
\begin{equation}\label{e3.16}
\begin{split}
 \underline{b}\|\nabla \Pi\|_{B_{2,r}^{\sigma}}\lesssim
A_T^{\frac{\sigma}{\alpha}}\|\mathcal {Q}L\|_{B_{2,r}^{\sigma}}.
\end{split}
\end{equation}
Similarly, if $\sigma$ satisfies that $-\frac{N}{2}<\sigma<-\alpha$,
\begin{equation}\nonumber
\begin{split}
 \underline{b}\|\nabla \Pi\|_{B_{2,r}^{\sigma}}&\lesssim
\|\mathcal {Q}L\|_{B_{2,r}^{\sigma}}+(\|S_{N_0}a\|_{{B}_{2,1}^{\frac{N}{2}
}}+\underline{b})\|\nabla \Pi\|_{B_{2,r}^{\sigma+\alpha}}\\
&\lesssim\|\mathcal {Q}L\|_{B_{2,r}^{\sigma}}+(\|S_{N_0}a\|_{{B}_{2,r}^{\frac{N}{2}}}+\underline{b})
\|\nabla \Pi\|^{\frac{\sigma+\alpha}{\sigma}}_{B_{2,r}^{\sigma}}\|\nabla \Pi\|^{-\frac{\alpha}{\sigma}}_{L^2}.
\end{split}
\end{equation}
Together with (\ref{e3.16}), we can conclude that
\begin{equation}\nonumber
\begin{split}
 \underline{b}\|\nabla \Pi\|_{B_{2,r}^{\sigma}}\lesssim
A_T^{\frac{|\sigma|}{\alpha}}\|\mathcal {Q}L\|_{B_{2,r}^{\sigma}},
\end{split}
\end{equation}
if  $\alpha<\sigma\leq\frac{N}{2}$ or $-\frac{N}{2}<\sigma<-\alpha$.
 Therefore, we are led to estimate $\mathcal {Q}L$ in $L_T^1(B_{2,1}^{s-1}).$ Since $\alpha<\frac{s-1}{2}$,
this may be done by making use of Bony's decomposition,  Lemma B.2 in \cite{Da5} and Propositions
2.2,
\begin{align}\label{e3.17}
\begin{split}
\|\mathcal {Q}L\|_{\widetilde{L}_T^1(B_{2,r}^{s-1-\alpha})}
&\lesssim
\|\mathcal {Q}f\|_{\widetilde{L}_T^1(B_{2,r}^{s-1})}+\int_0^T\|\nabla
v\|_{B_{2,1}^{\frac{N}{2}}}\|
u\|_{B_{2,r}^{s-1}}dt\\
&\ \ \ +\mu\|a-S_{N_0}a\|_{\widetilde{L}_T^\infty({B}_{2,1}^{\frac{N}{2}})}\|\Delta u\|_{\widetilde{L}_T^1(B_{2,r}^{s-1})}\\
&\ \ \ +\mu\|S_{N_0}a\|_{\widetilde{L}_T^\infty({B}_{2,1}^{\frac{N}{2}})}
\|\Delta u\|_{\widetilde{L}_T^1(B_{2,r}^{s-1-\alpha})}.
\end{split}
\end{align}
Here we have used the fact that $B_{2,1}^{s-1}\hookrightarrow B_{2,1}^{s-1-\alpha}.$
Plus (\ref{e3.17}) into (\ref{e3.11}) yields,
\begin{eqnarray}
    &&\|u\|_{\widetilde{L}_T^\infty(B_{2,r}^{s-1})}+\underline{\mu}\|u\|_{\widetilde{L}_T^1(B_{2,r}^{s+1})}
    +\|\nabla \Pi\|_{\widetilde{L}^1_T(B_{2,r}^{s-1})}\nonumber\\
&\lesssim&\|u_0\|_{B_{2,r}^{s-1}}
+\|f\|_{\widetilde{L}_T^1(B_{2,r}^{s-1})}+\mu A_T^{\kappa+1}\|a-S_{N_0}a\|_{\widetilde{L}_T^\infty({B}_{2,1}^{\frac{N}{2}})}\|
u\|_{\widetilde{L}_T^1(B_{2,r}^{s+1})}\nonumber\\
&& +\int_0^T\Big(\|\nabla
v\|_{B_{2,1}^{\frac{N}{2}}}\|
u\|_{B_{2,r}^{s-1}}+\mu2^{N_0\alpha}\|a\|_{{B}_{2,1}^{\frac{N}{2}}}\|u\|_{B_{2,r}^{s+1-\alpha}}\Big)dt\nonumber\\
&& +A_T^{\kappa}\Big(\|\mathcal {Q}f\|_{\widetilde{L}_T^1(B_{2,r}^{s-1})}
+\mu 2^{N_0\alpha}A_T\| u\|_{\widetilde{L}_T^1(B_{2,r}^{s+1-\alpha})}\nonumber\\
&& +\int_0^T\|\nabla
v\|_{B_{2,1}^{\frac{N}{2}}}\|
u\|_{B_{2,r}^{s-1}}dt\Big).
 \label{e3.18}
   \end{eqnarray}
The fact that
$$C\mu A_T^{\kappa+1}\|a-S_{N_0}a\|_{\widetilde{L}_T^\infty({B}_{2,1}^{\frac{N}{2}})}\leq\frac{1}{4}\underline{\mu}$$
and interpolation inequality imply that
$$\mu2^{N_0\alpha}\|a\|_{{B}_{2,1}^{\frac{N}{2}}}\|u\|_{B_{2,r}^{s+1-\alpha}}\leq
C
2^{2N_0}\|a\|^{\frac{2}{\alpha}}_{{B}_{2,1}^{\frac{N}{2}}}\|u\|_{B_{2,r}^{s-1}}
+\frac{1}{4}\underline{\mu}\|u\|_{B_{2,r}^{s+1}}.
$$
Let $X(t)=\|u\|_{\widetilde{L}_T^\infty(B_{2,1}^{s-1})}+\underline{\mu}\|u\|_{\widetilde{L}_T^1(B_{2,r}^{s+1})}
+\|\nabla \Pi\|_{\widetilde{L}^1_T(B_{2,r}^{s-1})}$, from the above estimate, we get
\begin{align}\label{e3.19}
   \begin{split}
    X(t)
&\lesssim\|u_0\|_{B_{2,r}^{s-1}}
+A_T^k\Big(\|f\|_{\widetilde{L}_T^1(B_{2,r}^{s-1})}+\mu2^{N_0\alpha} A_T\| u\|_{\widetilde{L}_T^1(B_{2,r}^{s+1-\alpha})}\\
&\ \ \ +\int_0^T\Big(\|\nabla
v\|_{B_{2,1}^{\frac{N}{2}}}+2^{2N_0}\|a\|^{\frac{2}{\alpha}}_{{B}_{2,1}^{\frac{N}{2}}}\Big)
X(t)dt.
   \end{split}
   \end{align}

 Then Gronwall lemma yields
 \begin{align}\label{e3.20}
   \begin{split}
   X(T)&\leq C e^{CV(T)}\Big(\|u_0\|_{B_{2,r}^{s-1}}\\
&\quad
+A_T^k\|f\|_{\widetilde{L}_T^1(B_{2,r}^{s-1})}+\mu 2^{N_0\alpha}A^{k+1}_T\| u\|_{\widetilde{L}_T^1(B_{2,r}^{s+1-\alpha})}\Big).
   \end{split}
   \end{align}
\end{proof}

Now we turn to give the priori estimate of the mixed linear system
\begin{equation}\label{e3.21}
 \left\{
 \begin{array}{ll}
 \partial_tE+u\cdot\nabla E+\Lambda d=F, \\
 \partial_td+u\cdot\nabla d-\mu\Delta d-\Lambda E=G.
   \end{array}
  \right.
\end{equation}
We have the following proposition which one can see the details of proof in \cite{Da6}.
\begin{prop}\label{p3.4}
Let $(E,d)$ be a solution of (\ref{e3.21}) on $[0,T]$ with initial data $(E_0,d_0)$,
 $1-\frac{N}{2}<s\leq 1+\frac{N}{2}$ and $V(t)=\int_0^t\|\nabla u(\tau)\|_{\dot{B}_{2,1}^{\frac{N}{2}}}d\tau$.
 Then the following estimate holds
 $$\aligned
  &\|E(t)\|_{\widetilde{B}_\mu^{s,\infty}}+\|d(t)\|_{\dot{B}_{2,1}^{s-1}}+\mu\int_0^T
  \Big(\|E(\tau)\|_{\widetilde{B}_\mu^{s,1}}+\|d(\tau)\|_{\dot{B}_{2,1}^{s+1}}\Big)d\tau\leq Ce^{CV(t)}\\
  &\times \Big(\|E_0\|_{\widetilde{B}_\mu^{s,\infty}}+\|d_0\|_{\dot{B}_{2,1}^{s-1}}+\mu\int_0^Te^{-CV(\tau)}
  (\|F(\tau)\|_{\widetilde{B}_\mu^{s,\infty}}+\|G(\tau)\|_{\dot{B}_{2,1}^{s-1}})d\tau\Big).
  \endaligned$$
\end{prop}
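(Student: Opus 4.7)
The plan is to carry out a Danchin-type dyadic energy argument tailored to the hybrid space $\widetilde{B}_\mu^{s,\infty}$. Applying $\dot\Delta_q$ to both equations of \eqref{e3.21} and writing $E_q=\dot\Delta_q E$, $d_q=\dot\Delta_q d$, $F_q=\dot\Delta_q F$, $G_q=\dot\Delta_q G$, the localized functions satisfy
\begin{align*}
\partial_t E_q+u\cdot\nabla E_q+\Lambda d_q&=F_q-[\dot\Delta_q,u\cdot\nabla]E,\\
\partial_t d_q+u\cdot\nabla d_q-\mu\Delta d_q-\Lambda E_q&=G_q-[\dot\Delta_q,u\cdot\nabla]d.
\end{align*}
A naive $L^2$ pairing with $(E_q,d_q)$ produces only parabolic smoothing of $d_q$: the transport terms cancel by $\mathrm{div}\,u=0$ and the $\Lambda$-coupling cancels by self-adjointness, so no damping on $E_q$ appears.

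The essential step is to recover the damping on $E_q$ predicted by the spectral analysis of the symbol $\bigl(\begin{smallmatrix}0&|\xi|\\-|\xi|&\mu|\xi|^2\end{smallmatrix}\bigr)$, whose dissipative rate on the hyperbolic component is of order $\min\{\mu 2^{2q},\mu^{-1}\}=\mu/\max\{\mu,2^{-q}\}^2$, precisely the scale built into the hybrid weight. I would therefore introduce the cross Lyapunov functional
$$\mathcal{L}_q(t):=\|E_q\|_{L^2}^2+\|d_q\|_{L^2}^2+2\kappa_q\!\int_{\Real^N}\!E_q\,\Lambda^{-1}d_q\,dx,\qquad\kappa_q:=c\min\{\mu 2^{2q},\mu^{-1}\},$$
with $c>0$ chosen small enough that Bernstein yields $\mathcal{L}_q\simeq\|E_q\|_{L^2}^2+\|d_q\|_{L^2}^2$. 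A direct computation using $\Lambda^{-1}\Delta=-\Lambda$ gives
$$\tfrac{d}{dt}\!\!\int\! E_q\Lambda^{-1}d_q\,dx=\|E_q\|_{L^2}^2-\|d_q\|_{L^2}^2-\mu\!\int\!E_q\Lambda d_q\,dx+\!\int\!E_q[u\cdot\nabla,\Lambda^{-1}]d_q\,dx+\mathrm{source},$$
so that $\kappa_q$ times this identity, added to the basic energy equality, generates the desired damping $\kappa_q\|E_q\|_{L^2}^2$; the parasitic $-\kappa_q\|d_q\|_{L^2}^2$ and $-\mu\kappa_q\!\int\!E_q\Lambda d_q$ are absorbed into $\mu\|\nabla d_q\|_{L^2}^2$ by Bernstein and Cauchy--Schwarz thanks to the size of $\kappa_q$. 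After dividing by $\sqrt{\mathcal{L}_q}$ one gets the dyadic inequality
$$\tfrac{d}{dt}\sqrt{\mathcal{L}_q}+c\mu 2^{2q}\|d_q\|_{L^2}+c\kappa_q\|E_q\|_{L^2}\lesssim\|F_q\|_{L^2}+\|G_q\|_{L^2}+\|[\dot\Delta_q,u\cdot\nabla](E,d)\|_{L^2}.$$

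To close, I would bound the transport commutators by the classical estimate
$$\|[\dot\Delta_q,u\cdot\nabla]w\|_{L^2}\leq Cc_q 2^{-qs}\|\nabla u\|_{\dot B_{2,1}^{N/2}}\|w\|_{\dot B_{2,1}^{s}},\qquad (c_q)\in\ell^1,$$
which holds exactly in the range $1-\tfrac{N}{2}<s\leq 1+\tfrac{N}{2}$ of the hypothesis, and similarly handle $[u\cdot\nabla,\Lambda^{-1}]$ by paraproduct calculus. Weighting the $E$-inequality by $2^{qs}\max\{\mu,2^{-q}\}$ and the $d$-inequality by $2^{q(s-1)}$, summing over $q\in\mathbb{Z}$, integrating in time, and applying Gronwall with $V(t)$, the left-hand side reassembles $\|E\|_{\widetilde{L}^\infty_t(\widetilde{B}_\mu^{s,\infty})}+\|d\|_{\widetilde{L}^\infty_t(\dot B_{2,1}^{s-1})}+\mu\!\int_0^t\!(\|E\|_{\widetilde{B}_\mu^{s,1}}+\|d\|_{\dot B_{2,1}^{s+1}})\,d\tau$, while the right-hand side reconstructs the initial data and source contributions weighted by $e^{CV(t)}$.

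The principal obstacle is the calibration of $\kappa_q$: it must be small enough to keep $\mathcal{L}_q$ coercive yet scale exactly like $\mu/\max\{\mu,2^{-q}\}^2$ so that, after summation with the hybrid weight, the damping reproduces the norm $\mu\|E\|_{\widetilde{B}_\mu^{s,1}}$ (with the correct $\mu$ prefactor) rather than something weaker. The remaining ingredients---the commutator estimate, the paraproduct bookkeeping for $[u\cdot\nabla,\Lambda^{-1}]$, the hybrid summation, and the Gronwall step---are standard Danchin-type manipulations and are detailed in \cite{Da6}.
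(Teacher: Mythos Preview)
Your outline is precisely the Danchin argument the paper invokes (it gives no self-contained proof, only the citation to \cite{Da6}), so the strategy is correct and aligned with the source.

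There is one concrete slip to fix. With the sign conventions of \eqref{e3.21} your cross-term computation gives
\[
\tfrac{d}{dt}\!\int\! E_q\,\Lambda^{-1}d_q\,dx=\|E_q\|_{L^2}^2-\|d_q\|_{L^2}^2-\mu\!\int\!E_q\Lambda d_q\,dx+\cdots,
\]
so adding $+2\kappa_q\!\int\! E_q\Lambda^{-1}d_q$ to the energy produces a \emph{source} $+\kappa_q\|E_q\|_{L^2}^2$, not damping. The Lyapunov functional must carry the opposite sign,
\[
\mathcal{L}_q=\|E_q\|_{L^2}^2+\|d_q\|_{L^2}^2-2\kappa_q\!\int\! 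E_q\,\Lambda^{-1}d_q\,dx,
\]
after which everything you wrote (coercivity via Bernstein, absorption of $-\mu\kappa_q\!\int\!E_q\Lambda d_q$ and $+\kappa_q\|d_q\|_{L^2}^2$ into the parabolic dissipation thanks to $\kappa_q\le c\min\{\mu 2^{2q},\mu^{-1}\}$) goes through.

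One further remark on the summation step: since $\mathcal L_q$ is a single quantity, you cannot literally apply two different weights ``to the $E$-inequality'' and ``to the $d$-inequality''. In \cite{Da6} this is handled by treating the regimes $2^q\lesssim\mu^{-1}$ and $2^q\gtrsim\mu^{-1}$ separately, with slightly different Lyapunov functionals in each, so that the natural weight in each regime matches both the $\widetilde{B}_\mu^{s,\infty}$ weight on $E$ and the $\dot B_{2,1}^{s-1}$ weight on $d$ (they coincide at low frequencies, and at high frequencies one rescales $E_q$ by $\mu 2^q$). Your unified $\kappa_q$ is morally equivalent, but when you write out the details you will still need this low/high splitting to make the summation reproduce the hybrid norms cleanly.
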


\section{Local well-posedness for data with critical regularity}
In this section, we will obtain the local existence of solutions to system (\ref{e1.5}). We proceed by the following steps.
\subsection{ A priori estimates }

Let $(u_L,\nabla \Pi_L)$ solves the non-stationary Stokes system
\begin{equation}\label{e4.1}
 \left\{
 \begin{array}{ll}
 \partial_tu_L-\mu\Delta u_L+\nabla \Pi_L=0, \\
  \hbox{div}u_L=0,\\
  u_L|_{t=0}=u_0.
   \end{array}
  \right.
\end{equation}
It is easy to obtain that $u_L\in C([0,T];B_{2,1}^{\frac{N}{2}-1})\cap L^1(0,T;B_{2,1}^{\frac{N}{2}+1})$
 and $\nabla \Pi_L\in L^1(0,T;B_{2,1}^{\frac{N}{2}-1})$.  Assume that $T$ has been chosen so small as
  to satisfy
\begin{align}\label{e4.2}
\|u_L\|_{\widetilde{L}_T^\infty(B_{2,1}^{\frac{N}{2}-1})}
\leq  \|u_0\|_{B_{2,1}^{\frac{N}{2}-1}},
\end{align}
and
\begin{align}\label{e4.3}
\mu\|u_L\|_{L_T^1(B_{2,1}^{\frac{N}{2}+1})}+\|\nabla \Pi_L\|_{L^1_T(B_{2,1}^{\frac{N}{2}-1})}\leq \lambda,
\end{align}
where $\lambda$ will be determined later.

 Let $\bar{u}=u-u_L, \nabla \bar{\Pi}=\nabla \Pi-\nabla \Pi_L$, where
$(a,u,\nabla \Pi)$ satisfies (\ref{e1.5}) on $[0,T]\times \Real^N$.
Suppose that $a\in C^1([0,T];{B}^{\frac{N}{2}}_{2,1})$, $u\in
C^1([0,T]; B_{2,1}^{\frac{N}{2}})\cap
L^1_T(B_{2,1}^{\frac{N}{2}+1})$ and $\nabla \Pi\in
L^1(0,T;B_{2,1}^{\frac{N}{2}-1})$. We can deduce that
$(a,\bar{u},\nabla\bar{\Pi},E)$ satisfies the following system
\begin{equation}\label{e4.4}
 \left\{
 \begin{array}{ll}
 \partial_ta+(\bar{u}+u_L)\cdot\nabla a=0,\\
 \partial_t\bar{u}+(\bar{u}+u_{L})\cdot\nabla \bar{u}-\mu(a+1)\Delta \bar{u}+(a+1)\nabla \bar{\Pi}=F+G, \\
 \partial_tE+\bar{u}\cdot\nabla E=H,\\
  \hbox{div}\bar{u}=0,\\
  (a,\bar{u},E)|_{t=0}=(a_0,0,E_0),
   \end{array}
  \right.
\end{equation}
where
$$F=-(\bar{u}+u_L)\nabla u_L+\mu a
\Delta u_L-a\nabla \Pi_L,$$
$$G=(a+1)(\partial_jE_{ik}E_{jk}+\partial_jE_{ij}),$$
 $$H=-u_L\nabla E+(\nabla\bar{u}+\nabla u_L)E+\nabla\bar{u}+\nabla u_L.$$

Denote $U_0:=\|u_0\|_{B_{2,1}^{\frac{N}{2}-1}}$,
$U_L:=\mu\|u_L\|_{L_T^1(B_{2,1}^{\frac{N}{2}+1})} +\|\nabla
\Pi_L\|_{L^1_T(B_{2,1}^{\frac{N}{2}-1})}$. Assume
that the following inequalities are fulfilled for some suitable $\lambda$, $\widetilde{U}_0$ and $T$:
\begin{equation}\label{e4.5}
\left\{
\begin{split}
&\|a\|_{\widetilde{L}_T^\infty({B}^{\frac{N}{2}}_{2,1})}\leq 2\|a_0\|_{{B}
^{\frac{N}{2}}_{2,1}}, \\
&A_T^{\kappa+1}\|a-S_{N_0}a\|_{\widetilde{L}_T^\infty(B_{2,1}^{\frac{N}{2}})}\leq \min\{\frac{1}{4C}
\underline{b},\ \frac{1}{4C\mu}\underline{\mu}\},\\
&\|E\|_{\widetilde{L}_T^\infty(B_{2,1}^{\frac{N}{2}})}\leq 6\|E_0\|_{B_{2,1}^{\frac{N}{2}}},\\
&\|\bar{u}\|_{\widetilde{L}_T^\infty(B_{2,1}^{\frac{N}{2}-1})}
+\underline{\mu}\|\bar{u}\|_{{L}_T^1(B_{2,1}^{\frac{N}{2}+1})}
+\|\nabla\bar{\Pi}\|_{{L}_T^1(B_{2,1}^{\frac{N}{2}-1})}
&\leq \lambda \widetilde{U}_0.
\end{split}
\right.
\end{equation}
 Then we are going to prove that
they are actually satisfied with strict inequalities. Since
(\ref{e4.5}) depend continuously on the time variable and are
satisfied with strict inequalities initially, a basic bootstrap argument insures
that (\ref{e4.5}) are indeed satisfied for small $T$. For convenience, we denote
$$\overline{U}(T)=\|\bar{u}\|_{\widetilde{L}_T^\infty(B_{2,1}^{\frac{N}{2}-1})}
+\underline{\mu}\|\bar{u}\|_{{L}_T^1(B_{2,1}^{\frac{N}{2}+1})}
+\|\nabla\bar{\Pi}\|_{{L}_T^1(B_{2,1}^{\frac{N}{2}-1})}.$$

First we prove $(\ref{e4.5})_{1}$ holds with strict inequality.
 Form Propositions \ref{p3.1}, we easily obtain that
\begin{align}\label{e4.6}
\begin{split}
\|a\|_{\widetilde{L}_T^\infty({B}^{\frac{N}{2}}_{2,1})}\leq
e^{C\widetilde{V}(T)}\|a_0\|_{{B}^{\frac{N}{2}}_{2,1}}\leq e^{C(\frac{\lambda}{\mu}+\frac{\lambda}
{\underline{\mu}}\widetilde{U}_0 )}\|a_0\|_{{B}^{\frac{N}{2}}_{2,1}},
\end{split}
\end{align}
where $$\widetilde{V}(T)=\int_0^T(\|\nabla \bar{u}\|_{B_{2,1}^{\frac{N}{2}}}+\| \nabla{u}_L\|_{B_{2,1}^{\frac{N}{2}}})dt.$$
If we choose $\lambda$ small enough such that
 \begin{equation}\label{e4.7}
e^{C(\frac{\lambda}{\mu}+\frac{\lambda}
{\underline{\mu}}\widetilde{U}_0 )}< 2,
\end{equation}
then $(\ref{e4.5})_{1}$ holds with strict inequality on $[0,T)$.

Similarly, form Propositions \ref{p3.1}, we have
\begin{align}\nonumber
\begin{split}
\|E\|_{\widetilde{L}_T^\infty(B_{2,1}^{\frac{N}{2}})}&
\leq e^{C(\frac{\lambda}{\mu}+\frac{\lambda}
{\underline{\mu}}\widetilde{U}_0 )}\Big(\|E_0\|_{B_{2,1}^{\frac{N}{2}}}+\int_0^T\|(\nabla\bar{u}+\nabla u_L)
(E+I)\|_{B_{2,1}^{\frac{N}{2}}}\Big)\\
& \leq e^{C(\frac{\lambda}{\mu}+\frac{\lambda}
{\underline{\mu}}\widetilde{U}_0 )}\Big(\|E_0\|_{B_{2,1}^{\frac{N}{2}}}
+\|(\nabla\bar{u},\nabla u_L)\|_{L^1_T(B_{2,1}^{\frac{N}{2}})}(\|E\|_{\widetilde{L}^\infty_T(B_{2,1}^{\frac{N}{2}})}+1)\\
&\leq
e^{C(\frac{\lambda}{\mu}+\frac{\lambda}
{\underline{\mu}}\widetilde{U}_0 )}\Big(\|E_0\|_{B_{2,1}^{\frac{N}{2}}}+(\frac{\lambda}{\mu}+\frac{\lambda}
{\underline{\mu}}\widetilde{U}_0 )(
\|E\|_{\widetilde{L}^\infty_T(B_{2,1}^{\frac{N}{2}})}
+1)\Big)\\
&<6\|E_0\|_{B_{2,1}^{\frac{N}{2}}},
\end{split}
\end{align}
when $\lambda_1$ satisfies
\begin{equation}\label{e4.8}
 \frac{\lambda}{\mu}+\frac{\lambda}
{\underline{\mu}}\widetilde{U}_0 <\frac{1}{8},\quad
 e^{C(\frac{\lambda}{\mu}+\frac{\lambda}
{\underline{\mu}}\widetilde{U}_0 )}< 2,\quad\frac{\lambda}{\mu}+\frac{\lambda}
{\underline{\mu}}\widetilde{U}_0 <\frac{1}{8}\|E_0\|_{B^{\frac{N}{2}}}.
 \end{equation}
Thus $(\ref{e4.5})_{3}$ holds with strict inequality.

According to the estimate (3.14) in \cite{BCD} (page 134), we get
that
$$\aligned
\|\Delta_ja\|_{L_T^\infty(L^2)}&\leq  \|\Delta_ja_0\|_{L^2}+Cc_j2^{-j\frac{N}{2}}A_0\|\nabla
(\bar{u}+u_L)\|_{{L}^1_T(B_{2,1}^{\frac{N}{2}})},
\endaligned$$
where the $l^1$ norm of $c_j$ equals to $1$ and
$A_0=1+\|a_0\|_{B_{2,1}^{\frac{N}{2}}}$.
By  the definition  of  Besov norm, we see that
$$\aligned
\|a-S_{N_0}a\|_{\widetilde{L}_T^\infty(B_{2,1}^{\frac{N}{2}})}
&=\sum\limits_{q\geq-1}2^{q\frac{N}{2}}\|\Delta_{q}(a-S_{N_0}a)\|_{L_T^\infty(L^2)}\\
&\leq \sum\limits_{j=N_0}^\infty\sum\limits_{|q-j|\leq2}
2^{q\frac{N}{2}}\|\Delta_q\Delta_ja\|_{L_T^\infty(L^2)}\\
&\leq
C\sum\limits_{j=N_0}^\infty\sum\limits_{|q-j|\leq2}2^{(q-j)\frac{N}{2}}
\Big(2^{j\frac{N}{2}}\|\Delta_ja_0\|_{L^2}+A_0\Big(\frac{\lambda}{\mu}
+\frac{\lambda}{\underline{\mu}}\widetilde{U}_0\Big)c_j\Big)\\
&\leq C\sum\limits_{j=N_0}^\infty
2^{j\frac{N}{2}}\|\Delta_ja_0\|_{L^2}+CA_0\Big(\frac{\lambda}{\mu}
+\frac{\lambda}{\underline{\mu}}\widetilde{U}_0\Big).
\endaligned$$
Since $a_0\in B_{2,1}^{\frac{N}{2}}$ and $A_T\leq 2A_0$, we can select $N_0$ large
enough such that
$$C\sum\limits_{j=N_0}^\infty2^{j\frac{N}{2}}\|\Delta_ja_0\|_{L^2}<(2A_0)^{-(\kappa+1)} \min\{\frac{1}{16}
\underline{b},\ \frac{1}{16\mu}\underline{\mu}\}.$$ So
$(\ref{e4.5})_{2}$ holds with strict inequality provide
\begin{equation}\label{e4.00}
CA_0\Big(\frac{\lambda}{\mu}+\frac{\lambda}{\underline{\mu}}\widetilde{U}_0\Big)<
(2A_0)^{-(\kappa+1)}\min\{\frac{1}{16}\underline{b},\ \frac{1}{16\mu}
\underline{\mu}\}.
\end{equation}

Finally,
we set $T$ small enough such that
$$ 4C2^{2N_0}\|a_0\|^{\frac{2}{\alpha}}_{{B}_{2,1}^{\frac{N}{2}}}T\leq \log2,$$
which combination with (\ref{e4.8}) implies
$$e^{C{V}(T)}\leq 4,$$
where $V(T)$ is defined in Proposition 3.2.
Thus from Proposition \ref{p3.2}, we obtain that
\begin{eqnarray}
\overline{U}(T)&\leq& Ce^{CV(T)}A_T^{\kappa}\Big(\|-(\bar{u}+u_L)\nabla
u_L+\mu a\Delta u_L\|_{L^1_T(B_{2,1}^{\frac{N}{2}-1})}\nonumber\\
&& +\|a\nabla \Pi_L\|_{L^1_T(B_{2,1}^{\frac{N}{2}-1})}
+\|(a+1)\partial_jE_{ik}E_{jk}\|_{L^1_T(B_{2,1}^{\frac{N}{2}-1})}\nonumber\\
&& +\|(a+1)\partial_jE_{ij}\|_{L^1_T(B_{2,1}^{\frac{N}{2}-1})}
+\mu A_T\| \bar{u}\|_{L_T^1(B_{2,1}^{\frac{N}{2}+1-\alpha})}\Big)
\nonumber\\
&\leq& 4CA_T^{\kappa}\Big(\overline{U}(T)(\|\nabla
u_L\|_{L^1_T(B_{2,1}^{\frac{N}{2}})}+U_0\|\nabla
u_L\|_{L^1_T(B_{2,1}^{\frac{N}{2}})}+T\| E\|^2_{\widetilde{L}^\infty_T(B_{2,1}^{\frac{N}{2}})}\nonumber\\
&&
+\mu\|a\|_{\widetilde{L}^\infty_T({B}^{\frac{N}{2}}_{2,1})}\|u_L\|_{L^1_T(B_{2,1}^{\frac{N}{2}+1})}
+T\|a\|_{\widetilde{L}^\infty_T({B}^{\frac{N}{2}}_{2,1})}\|E\|^2_{\widetilde{L}^\infty_T(B_{2,1}^{\frac{N}{2}})}\nonumber\\
&& +T\|a\|_{\widetilde{L}^\infty_T({B}^{\frac{N}{2}}_{2,1})}
\Big(2\|E\|_{\tilde{L}^\infty_T(B_{2,1}^{\frac{N}{2}})}+1\Big)
+T\|E\|_{\widetilde{L}^\infty_T(B_{2,1}^{\frac{N}{2}})}\nonumber\\
&& +\|a\|_{\widetilde{L}^\infty_T(B^{\frac{N}{2}}_{2,1})}\|\nabla \Pi_L\|_{L^1_T(B_{2,1}^{\frac{N}{2}-1})}
+2C\mu A_T^{\kappa \alpha+1}T^{\frac{\alpha}{2}}\|\bar{u}\|_{\widetilde{L}_T^1(B_{2,1}^{\frac{N}{2}+1})}\nonumber\\
&& +\frac{1}{8CA_T^{\kappa}}\|\bar{u}\|_{L_T^\infty(B_{2,1}^{\frac{N}{2}-1})}\Big).
\label{e4.10}
\end{eqnarray}
If we assume
\begin{align}\label{e4.11}
\lambda\leq\frac{1}{16C}\mu,
\end{align}
then we have
\begin{eqnarray}
\overline{U}(t)&\leq& 16C
\Big((\frac{1}{\mu}U_0+2\frac{1}{\mu}\|a_0\|_{B_{2,1}^{\frac{N}{2}}})+2\|a_0\|_{B_{2,1}^{\frac{N}{2}}})\lambda
+36T\|E_0\|^2_{B_{2,1}^{\frac{N}{2}}} \nonumber\\
&&+144T\|a_0\|_{{B}^{\frac{N}{2}}_{2,1}}\|E_0\|^2_{B_{2,1}^{\frac{N}{2}}}
+48T\|a_0\|_{{B}^{\frac{N}{2}}_{2,1}}\|E_0\|_{B_{2,1}^{\frac{N}{2}}}
+6T\|E_0\|_{B_{2,1}^{\frac{N}{2}}}\nonumber\\
&&+2T\|a_0\|_{{B}^{\frac{N}{2}}_{2,1}}\Big)
+64C2^{N_0\alpha}\mu\frac{1}{\underline{\mu}}(1+\|a_0\|_{B_{2,1}^{\frac{N}{2}}})^{\kappa \alpha+1}T^{\frac{\alpha}{2}}\lambda \widetilde{U}_0\nonumber\\
&\leq& C_0(U_0+1)\lambda+ C_0T+C_02^{N_0\alpha}T\lambda
\widetilde{U}_0,
\label{e4.12}
\end{eqnarray}
where $C_0$ is a general constant depending only on
$\|a_0\|_{{B}^{\frac{N}{2}}_{2,1}}$,
$\|a_0\|_{{B}^{\frac{N}{2}}_{2,1}}$, $\mu$, $\underline{\mu}$.
Hence, selecting $\widetilde{U}_0=8C_0(U_0+1)$, for fixed $\lambda$
which determined by (\ref{e4.7}), (\ref{e4.8}),
(\ref{e4.00}) and (\ref{e4.11}), we can choose $T$ small enough such
that
\begin{equation}\label{e4.13}
C_02^{N_0\alpha}T<\frac{1}{4},\quad 4C2^{2N_0}\|a_0\|^{\frac{2}{\alpha}}_{{B}_{2,1}^{\frac{N}{2}}}T\leq \log2,
\quad C_0T<\frac{1}{8}\lambda
\widetilde{U}_0.
\end{equation}
This implies $(\ref{e4.5})_{4}$  holds with strict inequality.

\subsection{Friedrichs Approximation and uniform estimates}
Let ${L}^2_n$ be the set of functions spectrally supported in the
annulus $\mathcal {C}_n =\{\xi\in\Real^N|\ |\xi|\leq n\}$. $J_n$
denotes the Friedrichs projector maps $L^2$ to ${L}^2_n$, defined by
$$\mathscr{F}{J}_nu(\xi)={1}_{\mathcal {C}_n}\mathscr{F}u(\xi)\ \ \mathrm{for\ all}\ \ \xi\in\Real^N.$$
We aim to solve the system of ordinary differential equations
\begin{equation}\label{e4.14}
 \left\{
 \begin{array}{ll}
 \frac{d}{dt}a=F_n(a,\bar{u},E),\\
 \frac{d}{dt}\bar{u}=G_n(a,\bar{u},E), \\
 \frac{d}{dt}E=H_n(a,\bar{u},E),\\
 (a,\bar{u},E)|_{t=0}=({J}_na_0,0,{J}_nE_0),
   \end{array}
  \right.
\end{equation}
in ${L}^2_n\times({L}^2_n)^N\times({L}^2_n)^{N^2}$ with
$$F_n(a,\bar{u},E)=-{J}_n(u\cdot\nabla a),$$
$$\aligned
G_n(a,\bar{u},E)&=-{J}_n(u\cdot\nabla u)+\mu{J}_n(b\Delta \bar{u})-J_n(a\nabla\Pi_L)\\
&\quad+\mu J_n(a\Delta u_L)+{J}_n(b\mathrm{div}(EE^\top))+{J}_n(b\mathrm{div}E)\\
&\quad+{J}_n(b\mathcal {H}_b(-{J}_n(u\cdot\nabla u)+\mu{J}_n( a\Delta (\bar{u}+u_L)))\\
&\quad+J_n(b\mathcal{H}_b(a\nabla\Pi_L))+{J}_n(b\mathcal {H}_b(b\mathrm{div}(EE^\top)+b\mathrm{div}E)),
\endaligned$$
$$H_n(a,\bar{u},E)=-{J}_n(u\cdot\nabla E)+{J}_n(\nabla u\cdot E)+{J}_n\nabla u.$$
 Here $u=\bar{u}+u_L$, $u_L$ is
 the solution of (\ref{e4.1}). $\mathcal {H}_b$ denotes the linear operator $F\mapsto \nabla \Pi$,
  i.e. $\nabla \Pi=\mathcal {H}_b(F)$ is the solution of the elliptic equation
$$\mathrm{div}(b\nabla \Pi)=\mathrm{div}F.$$
 The map
$$(a,\bar{u},E)\longmapsto (F_n(a,\bar{u},E),G_n(a,\bar{u},E),H_n(a,\bar{u},E))$$
is locally Lipschitz with respect to the variables $(a,\bar{u},E)$. Then we can conclude
 that the ordinary differential equations has a unique solution $(a^n,\bar{u}^n,E^n)$
  in the space $C^1([0,T_n^*);{L}^2_n)$. $T_n^*$ is the maximum existence time of $(a^n,\bar{u}^n,E^n)$.
  Then using the elliptic equation we can get the
  existence of $\nabla \Pi^n\in C^1([0,T_n^*);{L}^2_n)$.

Now we want to prove that $T_n^*$ may be bounded from below by the supremum $T$ of all the
 times satisfying (\ref{e4.13}), and that $(a^n,\bar{u}^n,E^n)$ is uniformly bounded
 in $X^{\frac{N}{2}}_T$. Since ${J}_n$ is an $L^2$ orthogonal projector, it has no effect
  on the priori estimates which were obtained in Section 4.1. Hence, the priori estimates
   applies to our approximate solution $(a^n,\bar{u}^n,E^n,\nabla \Pi^n)$ which independent
   of $n$. And the estimate in (\ref{e4.5}) to $(a^n,\bar{u}^n,E^n,\nabla \Pi^n)$ ensue
   that it is bounded in $L^\infty(0,T;{L}^2_n)$. So the standard continuation criterion
    for ordinary differential equations implies that $T_n^*$ is greater than any time $T$
     satisfying (\ref{e4.13}) and for all $n\geq1$,
\begin{equation}\label{e4.15}
\left\{
\begin{split}
&\|a^n\|_{\widetilde{L}_T^\infty({B}^{\frac{N}{2}}_{2,1})}\leq 2\|a_0\|_{{B}
^{\frac{N}{2}}_{2,1}},\\
&A_T^{\kappa+1}\|a^n-S_{N_0}a^n\|_{\widetilde{L}_T^\infty(B_{2,1}^{\frac{N}{2}})}\leq
\min\{\frac{1}{4C}
\underline{b},\ \frac{1}{4C\mu}\underline{\mu}\},\\
&\ \|E^n\|_{\widetilde{L}_T^\infty(B_{2,1}^{\frac{N}{2}})}\leq 6\|E_0\|_{B_{2,1}^{\frac{N}{2}}},\\
&\|\bar{u}^n\|_{\widetilde{L}_T^\infty(B_{2,1}^{\frac{N}{2}-1})}
+\underline{\mu}\|\bar{u}^n\|_{\tilde{L}_T^1(B_{2,1}^{\frac{N}{2}+1})}
+\|\nabla\bar{\Pi}^n\|_{\widetilde{L}_T^1(B_{2,1}^{\frac{N}{2}-1})}
\leq \lambda\widetilde{U}_0.
\end{split}
\right.
\end{equation}
\subsection{Compactness arguments}
We now have to prove the convergence of $(a^n, \bar{u}^n, E^n)$. This is of course a trifle
more difficult and requires compactness results. Let us first state the following lemma.
\begin{lem}
 $(a^n, \bar{u}^n, E^n)$ is uniformly bounded in $$C^{\frac{1}{2}}([0,T],{B}^{\frac{N}{2}-1}_{2,1})
 \times C^{\frac{1}{2}}([0,T],B_{2,1}^{\frac{N}{2}-2}+B_{2,1}^{\frac{N}{2}-1})\times
 C^{\frac{1}{2}}([0,T],B_{2,1}^{\frac{N}{2}-1})$$ for $N\geq3$.
\end{lem}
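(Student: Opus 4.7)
The basic strategy is to show that each time derivative lies in $L^2([0,T]; \cdot)$ with values in an appropriate Besov space, and then invoke the elementary inequality
$$\|f(t) - f(s)\|_{X} \leq \int_{s}^{t} \|\partial_\tau f\|_X\, d\tau \leq |t-s|^{1/2}\, \|\partial_t f\|_{L^2_T(X)},$$
which converts an $L^2_T(X)$ bound on the derivative into $C^{1/2}_T(X)$ regularity. A preliminary interpolation is the key enabler: the uniform bound $\bar u^n \in \widetilde{L}^\infty_T(B^{N/2-1}_{2,1}) \cap L^1_T(B^{N/2+1}_{2,1})$ from (\ref{e4.15}), together with the analogous property of $u_L$, yields $u^n = \bar u^n + u_L \in L^2_T(B^{N/2}_{2,1})$ uniformly in $n$, and in particular $\nabla \bar u^n \in L^2_T(B^{N/2-1}_{2,1})$, $\Delta \bar u^n \in L^2_T(B^{N/2-2}_{2,1})$.

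For the density equation $\partial_t a^n = -J_n(u^n \cdot \nabla a^n)$, I would apply Proposition \ref{p2.2} with $s_1 = N/2$, $s_2 = N/2-1$ to get
$$\|u^n \cdot \nabla a^n\|_{B^{N/2-1}_{2,1}} \lesssim \|u^n\|_{B^{N/2}_{2,1}} \|a^n\|_{B^{N/2}_{2,1}},$$
which is in $L^2_T$ since $u^n \in L^2_T(B^{N/2}_{2,1})$ and $a^n \in \widetilde{L}^\infty_T(B^{N/2}_{2,1})$. For $E^n$ the RHS is $-J_n(u^n \cdot \nabla E^n) + J_n(\nabla u^n \cdot E^n) + J_n \nabla u^n$; the first two terms are treated exactly as above (using $E^n \in \widetilde{L}^\infty_T(B^{N/2}_{2,1})$), while the linear term satisfies $\|\nabla u^n\|_{B^{N/2-1}_{2,1}} \lesssim \|u^n\|_{B^{N/2}_{2,1}} \in L^2_T$. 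This yields uniform bounds in $L^2_T(B^{N/2-1}_{2,1})$, hence the desired $C^{1/2}_T(B^{N/2-1}_{2,1})$ control for both $a^n$ and $E^n$.

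The momentum equation is the main obstacle and explains the appearance of the sum space $B^{N/2-2}_{2,1} + B^{N/2-1}_{2,1}$: some nonlinear terms are naturally $L^2_T$ only at the lower regularity $B^{N/2-2}_{2,1}$. I would split $\partial_t \bar u^n = R_1 + R_2$ where $R_2$ collects the nice terms $J_n(b \operatorname{div}(EE^\top))$ and $J_n(b \operatorname{div} E)$; using the algebra property of $B^{N/2}_{2,1}$ and Proposition \ref{p2.2}, these sit in $\widetilde{L}^\infty_T(B^{N/2-1}_{2,1}) \subset L^2_T(B^{N/2-1}_{2,1})$. The remaining terms $R_1$ — namely $-J_n(u^n\cdot \nabla u^n)$, $\mu J_n(b \Delta \bar u^n)$, $\mu J_n(a \Delta u_L)$, $J_n(a\nabla \Pi_L)$ and the $\mathcal{H}_b$-pressure corrections — are placed in $L^2_T(B^{N/2-2}_{2,1})$. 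For example, Proposition \ref{p2.2} with $s_1 = s_2 = N/2 - 1$ gives
$$\|u^n \cdot \nabla \bar u^n\|_{B^{N/2-2}_{2,1}} \lesssim \|u^n\|_{B^{N/2-1}_{2,1}} \|\nabla \bar u^n\|_{B^{N/2-1}_{2,1}},$$
and the $L^\infty_T \cdot L^2_T$ product is in $L^2_T$; the same restriction $s_1 + s_2 > 0$ forces $N \geq 3$, which accounts for the dimensional assumption. The viscous term $\mu b \Delta \bar u^n$ is handled with $s_1 = N/2$, $s_2 = N/2-2$ (again requiring $N \geq 3$), and $a \Delta u_L$ similarly using the interpolation $\Delta u_L \in L^2_T(B^{N/2-2}_{2,1})$. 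The pressure contributions via $\mathcal{H}_b$ are estimated by the same Besov product machinery together with the elliptic bound used in Proposition \ref{p3.2}.

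The main technical obstacle is therefore the momentum equation: one must carefully dispatch each of the seven or so terms on the RHS of the $\bar u^n$-ODE into the correct summand of $B^{N/2-2}_{2,1} + B^{N/2-1}_{2,1}$, using the interpolation $u^n \in L^2_T(B^{N/2}_{2,1})$ as the workhorse and relying on the product estimates of Proposition \ref{p2.2}, whose validity in the relevant range forces $N \geq 3$. Once every term is placed in $L^2_T$ of the sum space, the $C^{1/2}$-Hölder inequality above concludes the proof, with bounds independent of $n$ by virtue of (\ref{e4.15}).
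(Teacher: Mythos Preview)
Your proposal is correct and follows essentially the same strategy as the paper: bound each time derivative in $L^2_T$ of the appropriate Besov space (using the interpolated bound $u^n\in L^2_T(B^{N/2}_{2,1})$ as the workhorse, together with the product estimates of Proposition~\ref{p2.2}, whose constraint $s_1+s_2>0$ is precisely where $N\geq 3$ enters), and then deduce $C^{1/2}_T$ regularity via Cauchy--Schwarz. The only cosmetic difference is that the paper handles the pressure by explicitly splitting $\nabla\Pi^n=\nabla\Pi_1^n+\nabla\Pi_2^n$ into two elliptic problems---one with the $G_n$-source landing in $L^2_T(B^{N/2-1}_{2,1})$ and one with the $F_n$-source in $L^2_T(B^{N/2-2}_{2,1})$---whereas you place all pressure contributions in the lower-regularity summand; since $B^{N/2-1}_{2,1}\hookrightarrow B^{N/2-2}_{2,1}$ in the nonhomogeneous scale, this distinction is immaterial.
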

\begin{proof}
We first prove that $\partial_ta^n$ is uniformly bounded in $L_T^2(B_{2,1}^{\frac{N}{2}-1})$,
 which yields the desired result for $a^n$.

We observe that $a^{n}$ satisfies
$$\partial_ta^{n}=-{J}_n(u^n\cdot{\nabla a^{n}}).$$
According to the uniformly estimates in Section 4.2, $\bar{u}^n$ is uniformly bounded
in $L_T^2(B_{2,1}^{\frac{N}{2}})$ and $a^n$ is uniformly bounded in $L_T^\infty({B}^{\frac{N}{2}}_{2,1})$.
The definition of $u_L^n$ obviously provides us with uniform bounds for it in  $L_T^2(B_{2,1}^{\frac{N}{2}})$.
 So we can conclude that
$$\| \partial_ta^{n}\|_{L_T^2({B}^{\frac{N}{2}-1}_{2,1})}\leq C\Big(\|\bar{u}^n\|_{L_T^2(B_{2,1}^{\frac{N}{2}})}
+\|u_L^n\|_{L_T^2(B_{2,1}^{\frac{N}{2}})}\Big)\|a^{n}\|_{L_T^\infty({B}^{\frac{N}{2}}_{2,1})},$$
which implies that $\partial_ta^n$ is uniformly bounded in $L_T^2({B}^{\frac{N}{2}-1}_{2,1})$.

Similarly, we show that $\partial_tE^n$ is uniformly bounded in $L_T^2(B_{2,1}^{\frac{N}{2}-1})$. Let us recall that
$$\partial_tE^{n}={J}_n\Big(-(\bar{u}^n+u^n_L)\cdot{\nabla E^{n}}+(\nabla\bar{u}^n+\nabla u^n_L)\cdot{ E^{n}}
+(\nabla\bar{u}^n+\nabla u^n_L)\Big).$$
By the continuity of Paraproduct in Besov spaces which is stated in Proposition \ref{p2.1}, we have
$$
\| \partial_tE^{n}\|_{L_T^2(B_{2,1}^{\frac{N}{2}-1})}\leq C\Big(\|\bar{u}^n+u_L^n\|_{L_T^2(B_{2,1}^{\frac{N}{2}})}\Big)
\Big(\|E^{n}\|_{L_T^\infty(B_{2,1}^{\frac{N}{2}})}+1\Big),
$$
which implies that $\partial_tE^n$ is uniformly bounded in $L_T^2(B_{2,1}^{\frac{N}{2}-1})$.

Now we turn to prove $\partial_t\bar{u}^n$ is uniformly bounded in $L_T^2(B_{2,1}^{\frac{N}{2}-2})
+L_T^2(B_{2,1}^{\frac{N}{2}-1})$.
Note that $\partial_t\bar{u}^n$ satisfies
$$\aligned
\partial_t\bar{u}^{n}&={J}_n\Big(G_n-\nabla {\Pi}^{n}(1+a^{n})+\mu\Delta \bar{u}^{n}(1+a^{n})-\nabla\Pi^n_L\\
&\ \ \ -(\bar{u}^n+u^n_L)\cdot\nabla\bar{u}^{n}+\mu a^n\Delta u_L^n-(\bar{u}^n+u^n_L)\cdot\nabla u_L^{n}\Big),
\endaligned$$
where
$G_{n,i}=(a^n+1)(\partial_jE^n_{ik}E^n_{jk}+\partial_jE^n_{ij}).$ As
the above estimates, we know that
$$\mu\Delta \bar{u}^{n}(1+a^{n})-(\bar{u}^n+u^n_L)\cdot\nabla\bar{u}^{n}+\mu a^n\Delta u_L^n-(\bar{u}^n+u^n_L)
\cdot\nabla u_L^{n}+\nabla\Pi^n_L$$ is uniformly bounded in
$L_T^2(B_{2,1}^{\frac{N}{2}-2})$. By the expression of $G_{n,i}$, we
have
$$\aligned
\|G_n\|_{L_T^2(B_{2,1}^{\frac{N}{2}-1})}&\leq
CT^{\frac{1}{2}}\Big(\|
a^{n}\|_{L_T^\infty({B}^{\frac{N}{2}}_{2,1})}
\|E^{n}\|^2_{L_T^\infty(B_{2,1}^{\frac{N}{2}})}+\|E^{n}\|_{L_T^\infty(B_{2,1}^{\frac{N}{2}})}\\
&\ \ \ +\|
a^{n}\|_{L_T^\infty({B}^{\frac{N}{2}}_{2,1})}\|E^{n}\|_{L_T^\infty(B_{2,1}^{\frac{N}{2}})}
+\|E^{n}\|^2_{L_T^\infty(B_{2,1}^{\frac{N}{2}})}\Big),
\endaligned$$
which implies that $G_n$ is uniformly bounded in
$L_T^2(B_{2,1}^{\frac{N}{2}-1})$. Now we devote to estimate $\nabla
\Pi^{n}$. We split $\nabla \Pi^{n}$ into $\nabla \Pi^{n}_1$ and
$\nabla \Pi^{n}_2$,
 and their satisfy
$$\hbox{div}(b^{n}\nabla \Pi^{n}_1)=\hbox{div}G_n,\ \ \  \hbox{div}(b^{n}\nabla \Pi^{n}_2)=\hbox{div}F_n,$$
where $F_n=\mu\Delta \bar{u}^{n}a^{n}-(\bar{u}^n+u^n_L)\cdot\nabla\bar{u}^{n}+\mu a^n\Delta u_L^n-(\bar{u}^n
+u^n_L)\cdot\nabla u_L^{n}.$
By the estimate for the elliptic equation in Proposition \ref{p3.2}, we get
$$\|\nabla \Pi^{n}_1\|_{L_T^2(B_{2,1}^{\frac{N}{2}-1})}\leq C \|G_n\|_{L_T^2(B_{2,1}^{\frac{N}{2}-1})},
\|\nabla \Pi^{n}_2\|_{L_T^2(B_{2,1}^{\frac{N}{2}-2})}\leq C
\|F_n\|_{L_T^2(B_{2,1}^{\frac{N}{2}-2})}.$$ From the above discuss,
we know that $\|G_n\|_{L_T^2(B_{2,1}^{\frac{N}{2}-1})}$
 and $\|F_n\|_{L_T^2(B_{2,1}^{\frac{N}{2}-2})}$ are bounded, and $\nabla \Pi^{n}$
 is uniformly bounded in $L_T^2(B_{2,1}^{\frac{N}{2}-2})+L_T^2(B_{2,1}^{\frac{N}{2}-1})$.
 So $\partial_t\bar{u}^{n}$ is uniformly bounded in $L_T^2(B_{2,1}^{\frac{N}{2}-2})+L_T^2(B_{2,1}^{\frac{N}{2}-1})$. Thus we have proved the lemma.
\end{proof}

We can now turn to prove the existence of a solution. The procedure is similar as been
used in \cite{Da6}.  We can see in \cite{Da6} that the approximation solutions are convergence
in the term of subsequence by Ascoli's theorem. So we omit the details. The same argument to $N=2$,
we can also prove that
 $\partial_t\bar{u}^n$ is uniformly bounded in $L^{\frac{4}{3}}([0,T],{B}^{-\frac{1}{2}}_{2,1})$.

\subsection{Uniqueness}
Assume that we have two solutions of (\ref{e1.5}), $(a_1,u_1,E_1,\nabla \Pi_1)$ and $(a_2,u_2,E_2,\nabla \Pi_2)$
 with the same initial data satisfying the regularity assumptions of Theorem \ref{t1.2}.
 We first consider the case $N\geq 3$.

Set $a_1-a_2=\delta a$, $u_1-u_2=\delta u$, $\nabla \Pi_1-\nabla \Pi_2=\nabla\delta  \Pi$, $E_1-E_2=\delta E$.
 Then $(\delta a,\delta u,\nabla \delta P,\delta E)$ satisfies the following system
\begin{equation}\label{e4.0}
\left\{
 \begin{array}{ll}
\partial_t\delta a+u_2\cdot\nabla \delta a=-\delta u\cdot\nabla a_1,\\
\partial_t\delta u+u_2\cdot\nabla\delta u-\mu(1+a_1)(\Delta\delta u-\nabla \delta \Pi)
 =\delta G+\delta H, \\
\partial_t\delta E+u_2\cdot\nabla\delta E=\delta L,\\
 \hbox{div}\delta u=0,\\
 (\delta a,\delta u,\delta E)|_{t=0}=(0,0,0),
 \end{array}
  \right.
\end{equation}
where
$$\delta H=-\delta u\cdot\nabla u_1+\mu\delta a\Delta  u_2-\delta
 a\nabla \Pi_2,$$
$$\aligned
\delta G_i&=(a_1+1)\partial_j \delta E_{ik}E_{1,jk}+(a_1+1)\partial_j  E_{2,ik}\delta E_{jk}\\
&\ \ \ +\delta a\partial_j E_{2,ik}E_{2,jk}+\delta a\partial_j  E_{2,ij}+(a_1+1)\partial_j \delta E_{ij},
\endaligned$$
$$\delta L=-\delta u \cdot\nabla E_1+\nabla u_2\cdot\delta E+\nabla \delta u
\cdot E_1+\nabla\delta u.$$

From Proposition \ref{p3.1}, we have
\begin{eqnarray}
 \|\delta a\|_{\widetilde{L}^\infty_T({B}^{\frac{N}{2}-1}_{2,1})}
 &\leq& C_T\| a_1-S_{N_0}a_1\|_{L^\infty_T(B^{\frac{N}{2}}_{2,1})}
\|\delta u\|_{L^1_T(B_{2,1}^{\frac{N}{2}})}
\label{e4.16}\\
&&+C_T\|S_{N_0}a_1\|_{L^\infty_T(B^{\frac{N}{2}
+\eta}_{2,1})}T^{\frac{\eta}{2}}
\|\delta u\|^{\frac{\eta}{2}}_{L^\infty_T(B_{2,1}^{\frac{N}{2}-2})}
 \|\delta u\|^{1-\frac{\eta}{2}}_{L^1_T(B_{2,1}^{\frac{N}{2}})},\nonumber
 \end{eqnarray}
where $V(T)=\|\nabla u_2\|_{L^1_T(B_{2,1}^{\frac{N}{2}})}$ and $\eta\in(0,1)$.
Similarly, we have
\begin{equation}\label{e4.17}
\begin{split}
\|\delta E\|_{\widetilde{L}^\infty_T(B_{2,1}^{\frac{N}{2}-1})}&\leq e^{CV(T)}
\Big(\int_0^T\|u_2\|_{B_{2,1}^{\frac{N}{2}+1}}\|\delta E\|_{B_{2,1}^{\frac{N}{2}-1}}dt\\
&\ \ \ +\|\delta u\|_{L^1_T(B_{2,1}^{\frac{N}{2}})}\| E_1\|_{L^\infty_T(B_{2,1}^{\frac{N}{2}})}
+\|\nabla\delta u\|_{L^1_T(B_{2,1}^{\frac{N}{2}-1})}\Big)\\
&\leq C_T\Big(\|\delta u\|_{L^1_T(B_{2,1}^{\frac{N}{2}})}+\int_0^T\|u_2\|_{B_{2,1}^{\frac{N}{2}+1}}
\|\delta E\|_{B_{2,1}^{\frac{N}{2}-1}}dt\Big).
\end{split}
 \end{equation}
Next, denoting $\delta U=\|\delta u\|_{\widetilde{L}^\infty_T(B_{2,1}^{\frac{N}{2}-2})}
+\underline{\mu}\|\delta u\|_{L^1_T(B_{2,1}^{\frac{N}{2}})}+\|\nabla\delta \Pi\|_{L^1_T(B_{2,1}^{\frac{N}{2}-2})}$,
the estimate for linear momentum equations, Proposition \ref{p3.2} guides us to get
\begin{eqnarray}
\delta U&\leq& e^{CV(T)}A_{1,T}^{\kappa}\Big(\int_0^T\|\delta u\|_{B_{2,1}^{\frac{N}{2}-2}}\|\nabla u_1\|_{B_{2,1}^{\frac{N}{2}}}
+\mu\|\delta a\|_{{B}^{\frac{N}{2}-1}_{2,1}}\| \Delta u_2\|_{B_{2,1}^{\frac{N}{2}-1}}dt\nonumber\\
&&+\int_0^T\|\delta a\|_{{B}^{\frac{N}{2}-1}_{2,1}}\|\nabla \Pi_2\|_{B_{2,1}^{\frac{N}{2}-1}}dt+\|\delta G\|_{L^1_T(B_{2,1}^{\frac{N}{2}-2})}
\nonumber\\
&&+\mu A_{1,T}\|\delta u\|_{L^1_T(B_{2,1}^{\frac{N}{2}-\alpha})} \Big),
\label{e4.18}
 \end{eqnarray}
 with $A_{1,T}=1+\underline{b}2^{N_0\alpha}\|a_1\|_{{B}_{2,1}^{\frac{N}{2}}}$ and $\alpha\in(0,1).$
By interpolation, the last term can be bounded by
$$\|\delta u\|_{L^1_T(B_{2,1}^{\frac{N}{2}-\alpha})}\leq C \|\delta u\|^{\frac{\alpha}{2}}_{L^1_T(B_{2,1}^{\frac{N}{2}-2})}\|\delta u\|^{1-\frac{\alpha}{2}}_{L^1_T(B_{2,1}^{\frac{N}{2}})}.$$
Young's inequality and the uniform bounded of the solution imply that
\begin{equation}\nonumber
\begin{split}
\delta U&\leq C_T\int_0^T(1+\|u_2\|_{B^{\frac{N}{2}+1}_{2,1}}+\|\nabla \Pi_2\|_{B^{\frac{N}{2}+1}_{2,1}})\\
&\quad\quad\quad\Big(\|\delta u\|_{B_{2,1}^{\frac{N}{2}-2}}
+\|\delta a\|_{{B}^{\frac{N}{2}-1}_{2,1}}dt+\|\delta G\|_{L^1_T(B_{2,1}^{\frac{N}{2}-2})}\Big)dt.
\end{split}
 \end{equation}

By the expression of $\delta G$, we have the following estimate
\begin{eqnarray*}
&&\|\delta G\|_{L^1_T(B_{2,1}^{\frac{N}{2}-2})}\\
&\leq& \int_0^T\Big(\|\delta a\|_{{B}^{\frac{N}{2}-1}_{2,1}}
\|E_2\|^2_{B_{2,1}^{\frac{N}{2}}}+(\|E_1\|_{B_{2,1}^{\frac{N}{2}}}+\|E_2\|_{B_{2,1}^{\frac{N}{2}}})\|\delta E\|_{B_{2,1}^{\frac{N}{2}-1}}\\
&& +\|\delta a\|_{{B}^{\frac{N}{2}-1}_{2,1}}\|E_2\|_{B_{2,1}^{\frac{N}{2}}}
+\|a_1\|_{{B}^{\frac{N}{2}}_{2,1}}\|\delta E\|_{B_{2,1}^{\frac{N}{2}-1}}\\
&& +\|a_1\|_{{B}^{\frac{N}{2}}_{2,1}}(\|E_1\|_{B_{2,1}^{\frac{N}{2}}}+\|E_2\|_{B_{2,1}^{\frac{N}{2}}})
\|\delta E\|_{B_{2,1}^{\frac{N}{2}-1}}\Big)dt\\
&\leq& C_T\int_0^T\Big(\|\delta a\|_{{B}^{\frac{N}{2}-1}_{2,1}}+\|\delta E\|_{B_{2,1}^{\frac{N}{2}-1}}\Big)dt.
 \end{eqnarray*}

Combination the above estimates, we know that
\begin{equation}\nonumber
\begin{split}
\|\delta a\|_{\widetilde{L}^\infty_T({B}^{\frac{N}{2}-1}_{2,1})}
&+\|\delta E\|_{\widetilde{L}^\infty_T(B_{2,1}^{\frac{N}{2}-1})}+\delta U\\
&\leq C_T\int_0^T(1+\|u_2\|_{B^{\frac{N}{2}+1}_{2,1}}+\|\nabla \Pi_2\|_{B^{\frac{N}{2}+1}_{2,1}})\\
&\quad\quad\quad\times\Big(\|\delta u\|_{B_{2,1}^{\frac{N}{2}-2}}
+\|\delta a\|_{B_{2,1}^{\frac{N}{2}-1}}+\|\delta E\|_{B_{2,1}^{\frac{N}{2}-1}}\Big)dt.
\end{split}
 \end{equation}

which yields
$$\|\delta a\|_{\widetilde{L}^\infty_T({B}^{\frac{N}{2}-1}_{2,1})}
=\|\delta E\|_{\widetilde{L}^\infty_T(B_{2,1}^{\frac{N}{2}-1})}=\delta U=0,$$
for small enough $T$. A standard continuity argument allows us to know the
 uniqueness on $[0,T^*)$, $T^*$ is the lifespan of the local solution.
This finish the proof of the uniqueness of Theorem \ref{t1.2} when $N\geq3$.

In the case of $N=2$, the above proof fails because
$\frac{N}{2}-1=0$. Hence we may be tempted to estimate
$(\delta a,\delta u,\delta E,\nabla \delta \Pi)$ in
$$L^\infty_T({B}_{2,\infty}^0)\times L^\infty_T({B}_{2,\infty}^{-1})
\cap \widetilde{L}^1_T({B}_{2,\infty}^1)\times L^\infty_T({B}_{2,\infty}^0)
\times \widetilde{L}^1_T({B}_{2,\infty}^{-1}).
$$
Now we give the details of the proof.
From Proposition \ref{p3.1}, we have
\begin{equation}\label{e4.21}
\begin{split}
 \|\delta a\|_{\widetilde{L}^\infty_T({B}_{2,\infty}^0)}&\leq e^{CV(T)}
 \Big(\|\delta u\|_{\widetilde{L}^1_T({B}_{2,1}^1)}
 \| a_1\|_{\widetilde{L}^\infty_T({B}^1_{2,1})}\Big)\\
 &\leq C_T\|\delta u\|_{\widetilde{L}^1_T({B}_{2,\infty}^1)}
 \log\Big(e+\frac{\|\delta u\|_{\widetilde{L}^1_T({B}_{2,\infty}^2)}}{\|\delta u\|_{\widetilde{L}^1_T({B}_{2,\infty}^1)}}\Big).
 \end{split}
 \end{equation}
When $T$ is finite, $\|\delta u\|_{\widetilde{L}^1_T({B}_{2,\infty}^2)}\leq W(T)$, where $W(T)$ is finite.
Hence
\begin{equation}\label{e4.22}
\begin{split}
 \|\delta a\|_{\widetilde{L}^\infty_T({B}_{2,\infty}^0)}
 \leq C_T\|\delta u\|_{\widetilde{L}^1_T({B}_{2,\infty}^1)}
 \log\Big(e+\frac{W(T)}{\|\delta u\|_{\widetilde{L}^1_T(\dot{B}_{2,\infty}^1)}}\Big).
 \end{split}
 \end{equation}
Also we can get
\begin{eqnarray}
&&\|\delta E\|_{\widetilde{L}^\infty_T({B}_{2,\infty}^0)}\nonumber\\
&\leq& e^{CV(T)}\Big(\|\delta u\|_{\widetilde{L}^1_T({B}_{2,1}^1)}
\| E_1\|_{\widetilde{L}^\infty_T(B_{2,1}^{\frac{N}{2}})}+\|\delta u\|_{\widetilde{L}^1_T({B}_{2,\infty}^1)}\nonumber\\
&&
+\int_0^T\|u_2\|_{B_{2,1}^2}\|\delta E\|_{{B}_{2,\infty}^0}dt\Big)\nonumber\\
&\leq& C_T\Big(\|\delta u\|_{\widetilde{L}^1_T({B}_{2,1}^1)}
+\int_0^T\|u_2\|_{B_{2,1}^{2}}\|\delta E\|_{B_{2,\infty}^{0}}dt\Big)\label{e4.23}\\
&\leq& C_T\|\delta u\|_{\widetilde{L}^1_T({B}_{2,\infty}^1)}
\log\Big(e+\frac{W(T)}{\|\delta u\|_{\widetilde{L}^1_T({B}_{2,\infty}^1)}}\Big)
  +\int_0^T\|u_2\|_{B_{2,1}^2}\|\delta E\|_{{B}_{2,\infty}^0}dt.\nonumber
 \end{eqnarray}
 Then Gronwall inequality implies
 \begin{equation}\label{e4.24}
\begin{split}
\|\delta E\|_{\widetilde{L}^\infty_T({B}_{2,\infty}^0))}
\leq C_T\|\delta u\|_{\widetilde{L}^1_T({B}_{2,\infty}^1)}
\log\Big(e+\frac{W(T)}{\|\delta u\|_{\widetilde{L}^1_T({B}_{2,\infty}^1)}}\Big).
\end{split}
 \end{equation}
Next, denoting $\delta U=\|\delta u\|_{{L}^\infty_T({B}_{2,\infty}^{-1})}
+\underline{\mu}\|\delta u\|_{\widetilde{L}^1_T({B}_{2,\infty}^{1})}
+\|\nabla\delta \Pi\|_{\widetilde{L}^1_T({B}_{2,\infty}^{-1})}$,
 the estimate for linear momentum equations, Proposition \ref{p3.2} guides us to get

\begin{equation}\label{e4.25}
\begin{split}
\delta U&\leq e^{CV(T)}A_{1,T}^\kappa\Big(\int_0^T\|\delta u\|_{{B}_{2,\infty}^{-1}}
\|\nabla u_1\|_{B^{2}}+\mu\|\delta a\|_{{B}_{2,\infty}^{0}}\| u_2\|_{B^{2}}dt\\
&\quad +\int_0^T\|\delta a\|_{{B}_{2,\infty}^{0}}\|\nabla \Pi_2\|_{B_{2,1}^0}dt+\|\delta G\|_{\widetilde{L}^1_T({B}_{2,\infty}^{-1})}\\
&\quad\quad\quad+\mu A_{1,T}\|\delta u\|_{L^1_T(B_{2,1}^{1-\alpha})} \Big),
\end{split}
 \end{equation}
with $A_{1,T}=1+\underline{b}2^{N_0\alpha}\|a_1\|_{\widetilde{L}_T^\infty({B}_{2,1}^1)}$ and $\alpha\in(0,1).$
By the expression of $\delta G$, with the same calculus as $N\geq3$,  we have the following estimate
\begin{equation}\label{e4.26}
\begin{split}
\|\delta G\|_{\widetilde{L}^1_T({B}_{2,\infty}^{-1})}
&\leq C_T\int_0^T\Big(\|\delta a\|_{{B}_{2,\infty}^{0}}+\|\delta E\|_{{B}_{2,\infty}^{0}}\Big)dt.
\end{split}
 \end{equation}
Hence combination the above estimates, together with interpolation and Young's inequality, we know that
\begin{equation}\label{e4.27}
\begin{split}
\delta U(T)\leq C_T\int_0^T\Big((1+\|u_1(t)\|_{B_{2,1}^{2}})
\delta U(t)\log\Big(e+\frac{W(T)}{\delta U(t)}\Big)\Big)dt
\end{split}
 \end{equation}
which yields
$\delta U=0$ on [0,T] by Osgood Lemma (Lemma 3.4, Chapter 3, \cite{BCD}) because
$$\int_0^1\frac{1}{r\log\Big(e+\frac{C}{r}\Big)}dr=+\infty.$$
This finish the proof of the uniqueness of Theorem \ref{t1.2} when $N=2$.
\section{The Global Theory For Small Initial Velocity}
In the above section, we have proved that there exists a unique local solution $(a,u,E)$
 of (\ref{e1.5}) in $C([0,T];B_{2,1}^{\frac{N}{2}})\times X_T^{\frac{N}{2}-1}$.  We have used the $L^2$ estimate for $\nabla \Pi$ in (\ref{e3.14}). That is the reason why we work on the
  nonhomogeneous Besov space.
   We rewrite (\ref{e3.14}) as follows:
$$ \underline{b}\|\nabla \Pi\|_{B_{2,1}^{\sigma}}\lesssim
\|\mathcal {Q}L\|_{B_{2,1}^{\sigma}}+\|a\|_{{B}_{2,1}^{\frac{N}{2}}}\|\nabla \Pi\|_{B_{2,1}^{\sigma}}
$$
While fortunately, the assumption on $a_0$ in Theorem \ref{t1.4} can avoid the $L^2$ estimate  of $\nabla \Pi$.
More precisely, the second term can be absorbed by the left hand side due to the smallness condition on $a$.
 Thus using the same method as in Theorem \ref{t1.2}, we obtain that there exists a unique local solution
 $(a,u,E)$ of (\ref{e1.5}) in
$$\aligned
C([0,T^*);\dot{B}_{2,1}^{\frac{N}{2}})\times L^1(0,T^*;\dot{B}_{2,1}^{\frac{N}{2}+1})
\cap C([0,T^*);\dot{B}_{2,1}^{\frac{N}{2}-1})
 \times C([0,T^*);\dot{B}_{2,1}^{\frac{N}{2}}),
\endaligned$$
where $T^*$ is the maximum existence time of $(a,u,E)$. From the assumptions in Theorem 1.4,
using Proposition 3.1, we can easily obtain
$$a\in C([0,T^*);\dot{B}_{2,1}^{\frac{N}{2}-1}), \quad E\in C([0,T^*);\dot{B}_{2,1}^{\frac{N}{2}-1}).$$
We define $d^{ij}=-\Lambda^{-1}\partial_ju^i$, then $u^i=\Lambda^{-1}\partial_jd^{ij}$.
Applying $-\Lambda^{-1}\partial_j$ to the second equation of system (\ref{e1.5}), we get
\begin{align}\label{e5.1}
\begin{split}
\partial_td^{ij}+u\cdot\nabla d^{ij}&-\mu \Delta d^{ij}=-u\cdot\nabla(\Lambda^{-1}\partial_ju^i)\\
&+\Lambda^{-1}\partial_j\Big(u\cdot\nabla u^i+(a+1)\partial_i
\Pi-\mu a\Delta u^i-G_i\Big),
\end{split}
 \end{align}
where  $G_i=(a+1)(\partial_jE_{ik}E_{jk}+\partial_jE_{ij})$ is defined in Section 1.
 Note that the compatibility condition (\ref{e1.3}), we have
\begin{align}\label{e5.2}
\begin{split}
\Lambda^{-1}\partial_j\partial_kE_{ik}&=\Lambda^{-1}\partial_k\partial_jE_{ik}\\
&=\Lambda^{-1}\partial_k(\partial_kE_{ij}+E_{lk}\partial_lE_{ij}-E_{lj}\partial_lE_{ik})\\
&=-\Lambda E_{ij}+\Lambda^{-1}\partial_k(E_{lk}\partial_lE_{ij}-E_{lj}\partial_lE_{ik}).
\end{split}
\end{align}

Combination (\ref{e5.1}), (\ref{e5.2}) with (\ref{e1.5}) yields
\begin{equation}\label{e5.3}
 \left\{
 \begin{array}{ll}
 \partial_ta+u\cdot\nabla a=0,\\
 \partial_td^{ij}+u\cdot\nabla d^{ij}-\mu \Delta d^{ij}-\Lambda E_{ij}=H,\\
 \partial_t E+u\cdot\nabla E+\Lambda d^{ij}=R,\\
 d^{ij}=-\Lambda^{-1}\partial_ju^i,\\
  \hbox{div}u=0,\\
  ( a,u,E)|_{t=0}=( a_{0},u_{0},E_{0}),
   \end{array}
  \right.
\end{equation}
 where $$\aligned
 H=&-u\cdot\nabla(\Lambda^{-1}\partial_ju^i)+\Lambda^{-1}\partial_j\Big(u\cdot\nabla u^i
 +(a+1)\partial_i \Pi-\mu a\Delta u^i\\
 &-(a+1)\partial_lE_{ik}E_{lk}-a\partial_lE_{il}\Big)
 -\Lambda^{-1}\partial_k(E_{lk}\partial_lE_{ij}-E_{lj}\partial_lE_{ik}),
 \endaligned$$
  $$R=\partial_ku^iE_{kj}.$$
 Denote $\alpha=\|a_0\|_{\widetilde{B}_\mu^{\frac{N}{2},\infty}}+\|u_0\|_{\dot{B}_{2,1}^{\frac{N}{2}-1}}
 +\|E_0\|_{\widetilde{B}_\mu^{\frac{N}{2},\infty}}.$
 We are going to prove the existence of a positive $M$ such that,
 if $\alpha$ is small enough, the following bound holds
 \begin{equation}\label{e5.4}
 \|a\|_{\widetilde{L}^\infty(\widetilde{B}_\mu^{\frac{N}{2},\infty})}
 +\|(u,E)\|_{Y_{T^*}^{\frac{N}{2}}}\leq M\alpha.
 \end{equation}
 This estimate is the direct product of the following proposition.
 \begin{prop}\label{p5.1}
 If $$\|a\|_{\widetilde{L}_T^\infty(\widetilde{B}_\mu^{\frac{N}{2},\infty})}
 +\|(u,E)\|_{Y_{T}^{\frac{N}{2}}}\leq 2M\alpha,\ T\in(0,T^*),$$
 then, we have
 $$\|a\|_{\widetilde{L}_T^\infty(\widetilde{B}_\mu^{\frac{N}{2},\infty})}
 +\|(u,E)\|_{Y_{T}^{\frac{N}{2}}}\leq M\alpha,$$
 when $\alpha$ is small enough.
 \end{prop}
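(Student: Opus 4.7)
The plan is a closing bootstrap. Under the hypothesis $\|a\|_{\widetilde{L}_T^\infty(\widetilde{B}_\mu^{\frac{N}{2},\infty})}+\|(u,E)\|_{Y_T^{\frac{N}{2}}}\leq 2M\alpha$, I would apply Propositions \ref{p3.1} and \ref{p3.4} to the decoupled system (\ref{e5.3}), combined with the bilinear estimates of Propositions \ref{p2.2}, \ref{p2.4}, and \ref{p2.5} to bound the source terms $H$ and $R$, and re-prove the bound with constant $C\alpha+C_0(M\alpha)^2$. Choosing first $M=2C$ and then $\alpha$ so small that $2C_0M\alpha\leq 1$ closes the loop at $M\alpha$. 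All exponential factors $e^{CV(T)}$ produced by the transport/linear estimates remain $\leq 2$, since $V(T)\lesssim\|u\|_{L^1_T(\dot{B}_{2,1}^{\frac{N}{2}+1})}\leq 2M\alpha$ is small.

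For the density, because its equation is source-free, Proposition \ref{p3.1} applied at the two levels $\frac{N}{2}$ and $\frac{N}{2}-1$ (which together recover the hybrid norm $\widetilde{B}_\mu^{\frac{N}{2},\infty}$ by the remark after Definition \ref{D2.3}) yields
$$\|a\|_{\widetilde{L}_T^\infty(\widetilde{B}_\mu^{\frac{N}{2},\infty})}\leq e^{CV(T)}\|a_0\|_{\widetilde{B}_\mu^{\frac{N}{2},\infty}}\leq 2\alpha.$$
For $(d,E)$, I would apply Proposition \ref{p3.4} with $s=\frac{N}{2}$ to (\ref{e5.3}), using that Riesz transforms are bounded on $\dot{B}_{2,1}^{s}$ so that $u$- and $d$-norms are equivalent at the required levels. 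The remaining work is then to show
$$\|H\|_{L^1_T(\dot{B}_{2,1}^{\frac{N}{2}-1})}+\|R\|_{L^1_T(\widetilde{B}_\mu^{\frac{N}{2},\infty})}\lesssim(M\alpha)^2.$$
For $R=\partial_k u^i E_{kj}$ this is direct from Proposition \ref{p2.4} via $\|u\|_{L^1_T(\dot{B}_{2,1}^{\frac{N}{2}+1})}\|E\|_{L^\infty_T(\widetilde{B}_\mu^{\frac{N}{2},\infty})}$. In $H$, the convective pieces $u\cdot\nabla(\Lambda^{-1}\partial_j u^i)$ and $\Lambda^{-1}\partial_j(u\cdot\nabla u^i)$ fall to Proposition \ref{p2.2} since $\Lambda^{-1}\partial_j$ preserves $\dot{B}_{2,1}^{\frac{N}{2}-1}$; the $a$-piece $\mu\Lambda^{-1}\partial_j(a\Delta u^i)$ pairs $a\in L^\infty_T(\dot{B}_{2,1}^{\frac{N}{2}})$ with $\Delta u\in L^1_T(\dot{B}_{2,1}^{\frac{N}{2}-1})$; and the $E$-quadratic contributions $\Lambda^{-1}\partial_k(E_{lk}\partial_l E_{ij}-E_{lj}\partial_l E_{ik})$ and $\Lambda^{-1}\partial_j((a+1)\partial_l E_{ik}E_{lk})$ are handled by the hybrid product estimates of Proposition \ref{p2.5}, which exploit the $L^1_T(\widetilde{B}_\mu^{\frac{N}{2},1})$-control of $E$ delivered by the damping term in Proposition \ref{p3.4} itself.

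The main obstacle is the pressure term $\Lambda^{-1}\partial_j((a+1)\partial_i\Pi)$ in $H$. To treat it I would take divergence of the momentum equation, obtaining an elliptic equation $\hbox{div}((1+a)\nabla\Pi)=\hbox{div}\,L$ with $L$ quadratic in $(u,E,a)$; since $\|a\|_{\dot{B}_{2,1}^{\frac{N}{2}}}\leq 2M\alpha$ is small, the drift $\hbox{div}(a\nabla\Pi)$ can be absorbed into the left side as was done around (\ref{e3.14}), giving $\|\nabla\Pi\|_{L^1_T(\dot{B}_{2,1}^{\frac{N}{2}-1})}\lesssim(M\alpha)^2$ after estimating each piece of $L$ as above. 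The delicate point is that each contribution to $H$ and $R$ must land in the \emph{exact} hybrid or homogeneous space demanded by Proposition \ref{p3.4}, so that parabolic smoothing of $d$ is balanced precisely against the high-frequency damping of $E$; this is what forces the use of the hybrid product estimates Proposition \ref{p2.5} (rather than the ordinary Proposition \ref{p2.2}) for the $E$-quadratic and pressure-coupled terms, and it is where the paper's choice of the spaces $\widetilde{B}_\mu^{s,\infty}$, $\widetilde{B}_\mu^{s,1}$ really earns its keep.
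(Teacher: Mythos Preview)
Your proposal is correct and follows essentially the same route as the paper's own proof: bootstrap via Proposition~\ref{p3.1} on $a$ at the two regularity levels $\frac{N}{2}$ and $\frac{N}{2}-1$, Proposition~\ref{p3.4} on $(d,E)$, then control of $H$, $R$, and $\nabla\Pi$ through the bilinear estimates of Propositions~\ref{p2.2}, \ref{p2.4}, \ref{p2.5}, with the pressure handled by an elliptic estimate in which the drift $\mathrm{div}(a\nabla\Pi)$ is absorbed thanks to the smallness of $a$. The only cosmetic difference is that the paper tracks an extra cubic term $C_2M^3\alpha^3$ (coming from the factor $(1+M\alpha)$ in the $(a+1)E\nabla E$ contributions), which you have tacitly absorbed into the quadratic one; this changes nothing in the smallness regime.
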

 \begin{proof}
 First, from Proposition 3.1 in \cite{Da2}, we obtain
 $$\|a\|_{\widetilde{L}_T^\infty(\dot{B}_{2,1}^{\frac{N}{2}})}\leq C_1e^{\widetilde{V}(T)}\|a_0\|_{\dot{B}_{2,1}^{\frac{N}{2}}},$$
  $$\|a\|_{\widetilde{L}_T^\infty(\dot{B}_{2,1}^{\frac{N}{2}-1})}\leq C_1e^{\widetilde{V}(T)}\|a_0\|_{\dot{B}_{2,1}^{\frac{N}{2}-1}},$$
  where $\widetilde{V}(T)=\int_0^T\|\nabla u\|_{\dot{B}_{2,1}^{\frac{N}{2}}}dt.$
  If we assume $\alpha$ small enough such that
  $$e^{2M\alpha}\leq 2,$$
  then we have
  $$\|a\|_{\widetilde{L}_T^\infty(\widetilde{B}_\mu^{\frac{N}{2},\infty})}\leq M\alpha$$
  for $M=4C_1.$
 From Proposition 3.4, we have
 \begin{align}\label{e5.5}
 \begin{split}
 \|(d,E)\|_{Y_{T}^{\frac{N}{2}}}\leq &Ce^{V(T)}\Big(\|E_0\|_{\widetilde{B}_\mu^{\frac{N}{2},\infty}}
 +\|d_0\|_{\dot{B}_{2,1}^{\frac{N}{2}-1}}\\
 &+\|R\|_{L^1_T({\widetilde{B}_\mu^{\frac{N}{2},\infty}})}+
 \|H\|_{L^1_T({\dot{B}_{2,1}^{\frac{N}{2}-1}})}\Big).
 \end{split}
 \end{align}

 We want to bound
 $\|R\|_{L^1_T({\widetilde{B}_\mu^{\frac{N}{2},\infty}})}$ and
 $\|H\|_{L^1_T({\dot{B}_{2,1}^{\frac{N}{2}-1}})}$. With the help of Proposition \ref{p2.4}, we have
 $$\|R\|_{L^1_T({\widetilde{B}_\mu^{\frac{N}{2},\infty}})}
 \leq C\|E\|_{L^1_T({\widetilde{B}_\mu^{\frac{N}{2},\infty}})}
 \|\nabla u\|_{L^1_T(\dot{B}_{2,1}^{\frac{N}{2}})}\leq CM^2\alpha^2.$$
 Now, we devote to estimate $\|H\|_{L^1_T({\dot{B}_{2,1}^{\frac{N}{2}-1}})}$.
  From the expression of $H$, the  trouble is the estimate for $\nabla \Pi$.
   Applying $\hbox{div}$ to the momentum equation of (\ref{e1.5}) yields
 $$\partial_i((a+1)\partial_i\Pi)=\partial_i(-u\cdot\nabla u^i+\mu a\Delta u^i+L_i),$$
 with
 $$L_i=(a+1)\partial_jE_{ik}E_{jk}+a\partial_jE_{ij}.$$
  Here we have used $\mathrm{div}(E^\top)=0.$
 Then by the estimate of elliptic equation, the following bound holds
 $$\|\nabla \Pi\|_{L^1_T(\dot{B}_{2,1}^{\frac{N}{2}-1})}
 \leq C\|u\|^2_{L^2_T(\dot{B}_{2,1}^{\frac{N}{2}})}+C\|a\|_{L^\infty_T(\dot{B}_{2,1}^{\frac{N}{2}})}
 \|u\|_{L^1_T(\dot{B}_{2,1}^{\frac{N}{2}+1})}
 +\|L_i\|_{L^1_T(\dot{B}_{2,1}^{\frac{N}{2}-1})}.$$
 Note the expression of $L_i$, we only need to estimate
 $\|a\partial_jE_{ij}\|_{L^1_T(\dot{B}_{2,1}^{\frac{N}{2}-1})}$.
 In fact, by Proposition \ref{p2.5} with $s=\frac{N}{2},t=\frac{N}{2}-1$,
  we have the following
 \begin{align}\label{e5.6}
 \begin{split}
 \|a\partial_jE_{ij}\|_{L^1_T(\dot{B}_{2,1}^{\frac{N}{2}-1})}
 &\leq C\|a\|_{L^\infty_T(\widetilde{B}_\mu^{\frac{N}{2},\infty})}\|E\|_{L^1_T(\widetilde{B}_\mu^{\frac{N}{2},1})}\\
 &\leq CM^2\alpha^2.
 \end{split}
 \end{align}

 Others in $L_i$ are estimated similarly. Hence we get
$$\|\nabla \Pi\|_{L^1_T(\dot{B}_{2,1}^{\frac{N}{2}-1})}\leq CM^2\alpha^2(1+M\alpha).$$

 For the term of $H$, using the estimate of $\nabla \Pi$ and Proposition \ref{p2.2},
  we have
  \begin{align}\label{e5.7}
 \begin{split}
 \|H\|_{L^1_T(\dot{B}_{2,1}^{\frac{N}{2}-1})}&\leq C\|u\|^2_{L^2_T(\dot{B}_{2,1}^{\frac{N}{2}})}
 +C\mu\|a\|_{L^\infty_T(\dot{B}_{2,1}^{\frac{N}{2}})}
 \|u\|_{L^1_T(\dot{B}_{2,1}^{\frac{N}{2}+1})}\\
 &+C\|a\|_{L^\infty_T(\dot{B}_{2,1}^{\frac{N}{2}})}\|\nabla \Pi\|_{L^1_T(\dot{B}_{2,1}^{\frac{N}{2}-1})}
 +C\|\nabla \Pi\|_{L^1_T(\dot{B}_{2,1}^{\frac{N}{2}-1})}\\
 &+C\|(a+1)E\cdot \nabla E \|_{L^1_T(\dot{B}_{2,1}^{\frac{N}{2}-1})}
 +\|a\nabla E\|_{L^1_T(\dot{B}_{2,1}^{\frac{N}{2}-1})}\\
 &\leq CM^2\alpha^2(1+M\alpha).
 \end{split}
 \end{align}
 Plugging the estimates on $H$ and $R$ into (\ref{e5.5}), noting that $d^{ij}=-\Lambda^{-1}\partial_ju^i$, we have
 \begin{align}\label{e5.8}
 \begin{split}
 \|a\|_{\widetilde{L}_T^\infty(\widetilde{B}_\mu^{\frac{N}{2},\infty})}
 +\|(u,E)\|_{Y_{T}^{\frac{N}{2}}}&\leq C_2e^{2M\alpha}\Big(\alpha+C_2M^2\alpha^2+C_2M^3\alpha^3\Big)\\
 &\leq M\alpha,
 \end{split}
 \end{align}
 when $M=4C_2$ and $\alpha$ satisfies
 $$e^{2M\alpha}\leq 2,\ 2C_2^2M\alpha\leq \frac{1}{4},\ 2C_2^2M^2\alpha^2\leq \frac{1}{4}.$$
 Then, we finish the proof of Proposition \ref{p5.1} for $M=\max\{4C_1,4C_2\}$.
 \end{proof}

 Now we can give the proof of the  global existence. From the standard
 continuation method and Proposition \ref{p5.1}, we easily obtain that (\ref{e5.4})
  holds. Combining the local existence, if $T^*$ is finite, then the lifespan of the solution is greater than $T^*$.
   Hence $T^*=\infty$ and we finish the proof of Theorem \ref{t1.4}.

\section*{Acknowledgments}
This work is  partially supported by   NSF of China under Grant 10931007, 11271322, 11271017, Zhejiang Provincial Natural Science Foundation of China Z6100217, LY12A01022, Program for New Century Excellent Talents in University NCET-11-0462,
the Fundamental Research Funds for the Central Universities.

\end{document}